\let\oldbibliography\thebibliography
\renewcommand{\thebibliography}[1]{%
  \oldbibliography{#1}%
  \setlength{\itemsep}{0pt}%
}
\definecolor{darkred}{RGB}{100,0,0}
\definecolor{darkgreen}{RGB}{0,100,0}
\definecolor{darkblue}{RGB}{0,0,150}
\newtheorem{theorem}{Theorem}
\newtheorem{lemma}{Lemma}
\theoremstyle{remark}
\newtheorem{remark}{Remark}
\theoremstyle{definition}
\newtheorem{example}{Example}
\DeclareMathOperator*{\E}{\mathbb{E}}
\DeclareMathOperator*{\rank}{rank}
\DeclareMathOperator*{\vecto}{vec}
\DeclareMathOperator*{\argmax}{arg\, max}
\DeclareMathOperator*{\argmin}{arg\, min}
\def \R {\mathbb{R}}
\def \Z {\mathbb{Z}}
\def \P {\mathbb{P}}
\def \I {\mathbb{I}}
\def \eps {\varepsilon}
\def \W {\Omega}
\def \moo| {\langle}
\def \< {\langle }
\def \> {\rangle }
\def \^ {\widehat}
\newcommand{\norm}[1]{\left \|#1\right \|}
\newcommand{\twonorm}[1]{\norm{#1}_2}
\newcommand{\inftynorm}[1]{\norm{#1}_\infty}
\newcommand{\opnorm}[1]{\norm{#1}}
\newcommand{\fronorm}[1]{\norm{#1}_F}
\newcommand{\nucnorm}[1]{\norm{#1}_*}
\newcommand{\abs}[1]{\left | #1 \right |}
\def\minim{\mathop{\hbox{minimize}}}
\def\minimize#1{\displaystyle\minim_{#1}}
\def\st{\mathop{\hbox{subject to}}}
\def\half{{\textstyle{\frac{1}{2}}}}
\renewcommand{\Pr}[1]{\P \left( #1 \rule{0mm}{3mm}\right)}
\newcommand{\vect}[1]{\boldsymbol{#1}}
\newcommand{\mat}[1]{\boldsymbol{#1}}
\def \u {\vect{u}}
\def \v {\vect{v}}
\def \mE {\mat{E}}
\def \mP {\mat{P}}
\def \mQ {\mat{Q}}
\def \mX {\mat{X}}
\def \A {\mat{A}}
\def \B {\mat{B}}
\def \I {\mat{I}}
\def \M {\mat{M}}
\def \U {\mat{U}}
\def \V {\mat{V}}
\def \W {\mat{W}}
\def \X {\mat{X}}
\def \Y {\mat{Y}}
\def \Z {\mat{Z}}
\def \bzero {\vect{0}}
\def \bone {\vect{1}}
\def \bSigma {\mat{\Sigma}}
\def \Mhat {\widehat{\M}}
\def \Mhatel {\widehat{M}}
\def \meas {n}
\def \loglike {\mathcal{L}}
\newcommand{\Exp}[1]{\mathbb{E}\left[ #1\right]}
\newcommand{\ind}[1]{\mathds{1}_{[#1]}}
\newcommand{\infind}[1]{\mathds{I}_{[#1]}}
\newcommand{\ip}[2]{\ensuremath{\left\langle #1,#2\right\rangle}}
\newcommand{\had}{\circ}
\definecolor{noteColor}{RGB}{200,50,50}
\title{1-Bit Matrix Completion}
\author{
Mark A. Davenport, Yaniv Plan, Ewout van den Berg, Mary Wootters\thanks{ M.A.\ Davenport  is with the School of Electrical and Computer Engineering, Georgia Institute of Technology, Atlanta, GA.  Email: \{\href{mailto:mdav@gatech.edu}{mdav@gatech.edu}\}. \protect \\
\indent Y.\ Plan is with the Department of Mathematics, University of Michigan, Ann Arbor, MI.  Email: \{\href{mailto:yplan@umich.edu}{yplan@umich.edu}\}. \protect \\
\indent E.\ van den Berg is with the IBM T.J.\ Watson Research Center, Yorktown Heights, NY. Email:
\{\href{mailto:evandenberg@us.ibm.com}{evandenberg@us.ibm.com}\}. \protect \\
\indent M.\ Wootters is with the Department of Mathematics, University of Michigan, Ann Arbor, MI. Email:
\{\href{mailto:wootters@umich.edu}{wootters@umich.edu}\}. \protect \\
\indent This work was partially supported by NSF grants DMS-0906812, DMS-1004718,
DMS-1103909,  CCF-0743372, and CCF-1350616 and NRL grant N00173-14-2-C001.}}
\date{September 2012 (Revised May 2014)}
\begin{document}

\maketitle

\begin{abstract}
In this paper we develop a theory of matrix completion for the extreme case of noisy 1-bit observations.  Instead of observing a subset of the real-valued entries of a matrix $\M$, we obtain a small number of binary (1-bit) measurements generated according to a probability distribution determined by the real-valued entries of $\M$.  The central question we ask is whether or not it is possible to obtain an accurate estimate of $\M$ from this data.  In general this would seem impossible, but we show that the maximum likelihood estimate under a suitable constraint returns an accurate estimate of $\M$ when $\|\M\|_{\infty} \le \alpha$ and $\rank(\M) \leq r$.
If the log-likelihood is a concave function (e.g., the logistic or probit observation models), then we can obtain this maximum likelihood estimate by optimizing a convex program.  In addition, we also show that if instead of recovering $\M$ we simply wish to obtain an estimate of the distribution generating the 1-bit measurements, then we can eliminate the requirement that $\|\M\|_{\infty} \le \alpha$.  For both cases, we provide lower bounds showing that these estimates are near-optimal.  We conclude with a suite of experiments that both verify the implications of our theorems as well as illustrate some of the practical applications of 1-bit matrix completion.  In particular, we compare our program to standard matrix completion methods on movie rating data in which users submit ratings from 1 to 5.  In order to use our program, we quantize this data to a single bit, but we allow the standard matrix completion program to have access to the original ratings (from 1 to 5).  Surprisingly, the approach based on binary data performs significantly better.
\end{abstract}

\section{Introduction}

The problem of recovering a matrix from an incomplete sampling of its entries---also known as {\em matrix completion}---arises in a wide variety of practical situations.  In many of these settings, however, the observations are not only incomplete, but also highly {\em quantized}, often even to a single bit.  In this paper we consider a statistical model for such data where instead of observing a real-valued entry as in the original matrix completion problem, we are now only able to see a positive or negative rating.  This binary output is generated according to a probability distribution which is parameterized by the corresponding entry of the unknown low-rank matrix $\M$. The central question we ask in this paper is: ``Given observations of this form, can we recover the underlying matrix?''

Questions of this form are often asked in the context of \textit{binary PCA} or \textit{logistic PCA}.  There are a number of compelling algorithmic papers on these subjects, including \cite{de2006principal, Schein03, Khan10nips, collins2001generalization, Tipping98}, which suggest positive answers on simulated and real-world data.  In this paper, we give the first theoretical accuracy guarantees under a \textit{generalized linear model}.  We show that $O(r d)$ binary observations are sufficient to accurately recover a $d \times d$, rank-$r$ matrix by convex programming.  Our theory is inspired by the unquantized matrix completion problem and the closely related problem of 1-bit compressed sensing, described below.


\subsection{Matrix completion}

Matrix completion arises in a wide variety of practical contexts, including collaborative filtering~\cite{goldberg1992using}, system identification~\cite{liu2009interior}, sensor localization~\cite{biswas2006semidefinite,singer2009remark,singer2010uniqueness}, rank aggregation~\cite{gleich2011rank}, and many more. While many of these applications have a relatively long history, recent advances in the closely related field of compressed sensing~\cite{donoho2006compressed,candes2006,Davenport2012} have enabled a burst of progress in the last few years, and we now have a strong base of theoretical results concerning matrix completion~\cite{gross2011recovering, candes2009exact, candes2010power, keshavan2010matrix, keshavan2010noisy, negahban2010restricted, koltchinskii2011nuclear, candes2010matrix, recht2011simpler, rohde2011estimation, klopp2011rank, gaiffas2010sharp, klopp2011high, koltchinskii2012neumann}.  A typical result from this literature is that a generic $d \times d$ matrix of rank $r$ can be exactly recovered from $O(r \, d \, \mathrm{polylog}(d))$ randomly chosen entries.  Similar results can be established in the case of noisy observations and approximately low-rank matrices~\cite{keshavan2010noisy, negahban2010restricted, koltchinskii2011nuclear, candes2010matrix, rohde2011estimation, klopp2011rank, gaiffas2010sharp, klopp2011high, koltchinskii2012neumann}.

Although these results are quite impressive, there is an important gap between the statement of the problem as considered in the matrix completion literature and many of the most common applications discussed therein.  As an example, consider collaborative filtering and the now-famous ``Netflix problem.''  In this setting, we assume that there is some unknown matrix whose entries each represent a rating for a particular user on a particular movie.  Since any user will rate only a small subset of possible movies, we are only able to observe a small fraction of the total entries in the matrix, and our goal is to infer the unseen ratings from the observed ones.  If the rating matrix has low rank, then this would seem to be the exact problem studied in the matrix completion literature.  However, there is a subtle difference: the theory developed in this literature generally assumes that observations consist of (possibly noisy) continuous-valued entries of the matrix, whereas in the Netflix problem the observations are ``quantized'' to the set of integers between 1 and 5.  If we believe that it is possible for a user's true rating for a particular movie to be, for example, 4.5, then we must account for the impact of this ``quantization noise'' on our recovery.  Of course, one could potentially treat quantization simply as a form of bounded noise, but this is somewhat unsatisfying because the ratings aren't just quantized --- there are also hard limits placed on the minimum and maximum allowable ratings.  (Why should we suppose that a movie given a rating of 5 could not have a true underlying rating of 6 or 7 or 10?)  The inadequacy of standard matrix completion techniques in dealing with this effect is particularly pronounced when we consider recommender systems where each rating consists of a single bit representing a positive or negative rating (consider for example rating music on Pandora, the relevance of advertisements on Hulu, or posts on sites such as MathOverflow). In such a case, the assumptions made in the existing theory of matrix completion do not apply, standard algorithms are ill-posed, and alternative theory is required.

\subsection{Statistical learning theory and matrix completion}

While the theory of matrix completion gained quite a bit of momentum following advances in compressed sensing, earlier results from Srebro et al.~\cite{srebro2004generalization, srebro2004maximum} also addressed this problem from a slightly different perspective rooted in the framework of  statistical learning theory.  These results also deal with the binary setting that we consider here.  They take a model-free approach and prove \textit{generalization error bounds}; that is, they give conditions under which good agreement on the observed data implies good agreement on the entire matrix.  For example, in~\cite{srebro2004maximum}, agreement is roughly measured via the fraction of predicted signs that are correct, but this can also be extended to other notions of agreement \cite{srebro2004generalization}.  From the perspective of statistical learning theory, this corresponds to bounding the generalization error under various classes of loss functions.

One important difference between the statistical learning approach and the path taken in this paper is that here we focus on {\em parameter estimation} --- that is, we seek to recover the matrix $\M$ itself (or the distribution parameterized by $\M$ that governs our observations) --- although we also prove generalization error bounds en route to our main results on parameter and distribution recovery. We discuss the relationship between our approach and the statistical learning approach in more detail in Section~\ref{ssec:recover_distribution} (see Remark \ref{remark:generror}).  Briefly, our generalization error bounds correspond to the case where the loss function is the {\em  log likelihood}, and do not seem to fit directly within the framework of the existing literature.

\subsection{1-bit compressed sensing and sparse logistic regression}

As noted above, matrix completion is closely related to the field of compressed sensing, where a theory to deal with single-bit quantization has recently been developed~\cite{Boufounos2008, Jacques2011, pv-1-bit, plan2012robust, Gupta2010, laska2011regime}.  In compressed sensing, one can recover an $s$-sparse vector in $\mathbb{R}^d$ from $O( s \log( d/s ))$ random linear measurements---several different random measurement structures are compatible with this theory.  In 1-bit compressed sensing, only the signs of these measurements are observed, but an $s$-sparse signal can still be approximately recovered from the same number of measurements~\cite{Jacques2011, pv-1-bit, plan2012robust, ai2012one}.  However, the only measurement ensembles which are currently known to give such guarantees are Gaussian or sub-Gaussian \cite{ai2012one}, and are thus of a quite different flavor than the kinds of samples obtained in the matrix completion setting. A similar theory is available for the closely related problem of sparse binomial regression, which considers more classical statistical models~\cite{bach2010self, bunea2008honest, plan2012robust, kakade2009learning, meier2008group, negahban2010unified, ravikumar2010high, van2008high} and allows non-Gaussian measurements.  Our aim here is to develop results for matrix completion of the same flavor as 1-bit compressed sensing and sparse logistic regression.

\subsection{Challenges}
\label{sec:challenges}

In this paper, we extend the theory of matrix completion to the case of 1-bit observations.  We consider a general observation model but focus mainly on two particular possibilities: the models of logistic and probit regression. We discuss these models in greater detail in Section~\ref{sec:model}, but first we note that several new challenges arise when trying to leverage results in 1-bit compressed sensing and sparse logistic regression to develop a theory for 1-bit matrix completion. First, matrix completion is in some sense a more challenging problem than compressed sensing.  Specifically, some additional difficulty arises because the set of low-rank matrices is ``coherent" with single entry measurements (see~\cite{gross2011recovering}).  In particular, the sampling operator does not act as a near-isometry on all matrices of interest, and thus the natural analogue to the restricted isometry property from compressed sensing cannot hold in general---there will always be certain low-rank matrices that we cannot hope to recover without essentially sampling every entry of the matrix.  For example, consider a matrix that consists of a single nonzero entry (which we might never observe).  The typical way to deal with this possibility is to consider a reduced set of low-rank matrices by placing restrictions on the entry-wise maximum of the matrix or its singular vectors---informally, we require that the matrix is not too ``spiky".

We introduce an entirely new dimension of ill-posedness by restricting ourselves to 1-bit observations. To illustrate this, we describe one version of 1-bit matrix completion in more detail (the general problem definition is given in Section \ref{sec:model} below).  Consider a $d \times d$ matrix $\M$ with rank $r$. Suppose we observe a subset $\Omega$ of entries of a matrix $\Y$.  The entries of $\Y$ depend on $\M$ in the following way:
\begin{equation}
\label{eq:noisy 1 bit}
    Y_{i,j} = \begin{cases} +1 & \mathrm{if} \ M_{i,j} + Z_{i,j} \geq 0 \\ -1 & \mathrm{if} \ M_{i,j} + Z_{i,j} < 0 \end{cases}
\end{equation}
where $\Z$ is a matrix containing noise. This latent variable model is the direct analogue to the usual 1-bit compressed sensing observation model.  In this setting, we view the matrix $\M$ as more than just a parameter of the distribution of $\Y$; $\M$ represents the real underlying quantity of interest that we would like to estimate. Unfortunately, in what would seem to be the most benign setting---when $\Omega$ is the set of all entries, $\Z = \bzero$, and $\M$ has rank 1 and a bounded entry-wise maximum---the problem of recovering $\M$ is ill-posed.  To see this, let $\M = \u\v^*$ for any vectors $\u,\v \in \R^d$, and for simplicity assume that there are no zero entries in $\u$ or $\v$.  Now let $\widetilde{\u}$ and $\widetilde{\v}$ be any vectors with the same sign pattern as $\u$ and $\v$ respectively.  It is apparent that both $\M$ and $\widetilde{\M} = \widetilde{\u}\widetilde{\v}^*$ will yield the same observations $\Y$, and thus $\M$ and $\widetilde{\M}$ are indistinguishable.  Note that while it is obvious that this 1-bit measurement process will destroy any information we have regarding the scaling of $\M$, this ill-posedness remains even if we knew something about the scaling a priori (such as the Frobenius norm of $\M$).  For any given set of observations, there will always be radically different possible matrices that are all consistent with observed measurements.

After considering this example, the problem might seem hopeless.  However, an interesting surprise is that when we add noise to the problem (that is, when $\Z \neq \bzero$ is an appropriate stochastic matrix) the picture completely changes---this noise has a ``dithering'' effect and the problem becomes well-posed.  In fact, we will show that in this setting we can sometimes recover $\M$ to the same degree of accuracy that is possible when given access to completely unquantized measurements!  In particular, under appropriate conditions, $O(rd)$ measurements are sufficient to accurately recover $\M$.

\subsection{Applications}

The problem of 1-bit matrix completion arises in nearly every application that has been proposed for ``unquantized'' matrix completion.  To name a few:
\begin{itemize}
\item {\bf Recommender systems:}  As mentioned above, collaborative filtering systems often involve discretized recommendations~\cite{goldberg1992using}.  In many cases, each observation will consist simply of a ``thumbs up'' or ``thumbs down" thus delivering only 1 bit of information (consider for example rating music on Pandora, the relevance of advertisements on Hulu, or posts on sites such as MathOverflow).  Such cases are a natural application for 1-bit matrix completion.
\item {\bf Analysis of survey data:} Another potential application for matrix completion is to analyze incomplete survey data.  Such data is almost always heavily quantized since people are generally not able to distinguish between more than $7 \pm 2$ categories~\cite{millerMagic}. 1-bit matrix completion provides a method for analyzing incomplete (or potentially even complete) survey designs containing simple yes/no or agree/disagree questions.
\item {\bf Distance matrix recovery and multidimensional scaling:} Yet another common motivation for matrix completion is to localize nodes in a sensor network from the observation of just a few inter-node distances~\cite{biswas2006semidefinite,singer2009remark,singer2010uniqueness}.  This is essentially a special case of multidimensional scaling (MDS) from incomplete data~\cite{borgModern}.  In general, work in the area assumes real-valued measurements.  However, in the sensor network example (as well as many other MDS scenarios), the measurements may be very coarse and might only indicate whether the nodes are within or outside of some communication range.  While there is some existing work on MDS using binary data~\cite{greenMultivariate} and MDS using incomplete observations with other kinds of non-metric data~\cite{spenceSingle}, 1-bit matrix completion promises to provide a principled and unifying approach to such problems.
\item {\bf Quantum state tomography:} Low-rank matrix recovery from
  incomplete observations also has applications to quantum state
  tomography~\cite{gross2010quantum}.  In this scenario, mixed quantum
  states are represented as Hermitian matrices with nuclear norm equal
  to 1.  When the state is nearly pure, the matrix can be well
  approximated by a low-rank matrix and, in particular, fits the model
  given in Section~\ref{ssec:approx} up to a rescaling.  Furthermore,
  Pauli-operator-based measurements give probabilistic binary outputs.
  However, these are based on the inner products with the Pauli matrices, and thus of a slightly different flavor than the measurements considered in this paper.  Nevertheless, while we do not address this scenario directly, our theory of 1-bit matrix completion could easily be adapted to quantum state tomography.
\end{itemize}

\subsection{Notation}

We now provide a brief summary of some of the key notation used in this paper.  We use $[d]$ to denote the set of integers $\{1, \ldots, d\}$.  We use capital boldface to denote a matrix (e.g., $\M$) and standard text to denote its entries (e.g., $M_{i,j}$).  Similarly, we let $\bzero$ denote the matrix of all-zeros and $\bone$ the matrix of all-ones. We let $\|\M\|$ denote the operator norm of $\M$, $\fronorm{\M} = \sqrt{\sum_{i,j} M_{i,j}^2}$ denote the Frobenius norm of $\M$, $\nucnorm{\M}$ denote the nuclear or Schatten-1 norm of $\M$ (the sum of the singular values), and  $\inftynorm{\M} = \max_{i,j} \abs{M_{i,j}}$ denote the entry-wise infinity-norm of $\M$. We will use the Hellinger distance, which, for two scalars $p,q \in [0,1]$, is given by
\[
    d_H^2(p,q) := (\sqrt{p} - \sqrt{q})^2 + (\sqrt{1-p} - \sqrt{1-q})^2.
\]
This gives a standard notion of distance between two binary probability distributions.  We also allow the Hellinger distance to act on matrices via the average Hellinger distance over their entries:  for matrices $\mP, \mQ \in [0,1]^{d_1 \times d_2}$, we define
\[
    d^2_H(\mP,\mQ) = \frac{1}{d_1 d_2} \sum_{i,j} d^2_H(P_{i,j}, Q_{i,j}).
\]
Finally, for an event $\mathcal{E}$ ,$\ind{\mathcal{E}}$ is the indicator function for that event, i.e., $\ind{\mathcal{E}}$ is $1$ if $\mathcal{E}$ occurs and $0$ otherwise.

\subsection{Organization of the paper}

We proceed in Section~\ref{sec:problem} by describing the 1-bit matrix completion problem in greater detail.  In Section~\ref{sec:main} we state our main results.  Specifically, we propose a pair of convex programs for the 1-bit matrix completion problem and establish upper bounds on the accuracy with which these can recover the matrix $\M$ and the distribution of the observations $\Y$.  We also establish lower bounds, showing that our upper bounds are nearly optimal.   In Section~\ref{sec:sims} we describe numerical implementations of our proposed convex programs and demonstrate their performance on a number of synthetic and real-world examples.  Section~\ref{sec:disc} concludes with a brief discussion of future directions. The proofs of our main results are provided in the appendix.

\section{The 1-bit matrix completion problem}
\label{sec:problem}

\subsection{Observation model}
\label{sec:model}

We now introduce the more general observation model that we study in this paper. Given a matrix $\M \in \R^{d_1\times d_2}$, a subset of indices $\Omega \subset [d_1] \times [d_2]$, and a differentiable function $f: \R \rightarrow [0,1]$, we observe
\begin{equation}
\label{eq:observations}
    Y_{i,j} = \begin{cases} +1 & \mathrm{with~probability} \ f(M_{i,j}), \\
        -1 & \mathrm{with~probability} \ 1- f(M_{i,j}) \end{cases}
    \quad \text{ for $(i,j) \in \Omega$.}
\end{equation}
We will leave $f$ general for now and discuss a few common choices just below.  As has been important in previous work on matrix completion, we assume that $\Omega$ is chosen at random with $\E |\Omega| = \meas $.  Specifically, we assume that $\Omega$ follows a binomial model in which each entry $(i,j) \in [d_1] \times [d_2]$ is included in $\Omega$ with probability $\frac{\meas }{d_1 d_2}$, independently.

Before discussing some particular choices for $f$, we first note that while the observation model described in~\eqref{eq:observations} may appear on the surface to be somewhat different from the setup in~\eqref{eq:noisy 1 bit}, they are actually equivalent if $f$ behaves likes a cumulative distribution function.  Specifically, for the model in~\eqref{eq:noisy 1 bit}, if $\Z$ has i.i.d.\ entries, then by setting $f(x) := \Pr{Z_{1,1} \geq - x}$, the model in~\eqref{eq:noisy 1 bit} reduces to that in~\eqref{eq:observations}.  Similarly, for any choice of $f(x)$ in~\eqref{eq:observations}, if we define $\Z$ as having i.i.d.\ entries drawn from a distribution whose cumulative distribution function is given by $F_Z(x) = \Pr{z \le x} = 1 - f(-x)$, then~\eqref{eq:observations} reduces to~\eqref{eq:noisy 1 bit}.  Of course, in any given situation one of these observation models may seem more or less natural than the other---for example,~\eqref{eq:noisy 1 bit} may seem more appropriate when $\M$ is viewed as a latent variable which we might be interested in estimating, while~\eqref{eq:observations} may make more sense when $\M$ is viewed as just a parameter of a distribution.  Ultimately, however, the two models are equivalent.

We now consider two natural choices for $f$ (or equivalently, for $\Z$):
\begin{example}[Logistic regression/Logistic noise]
\label{ex:logisticRegression}
The logistic regression model, which is common in statistics, is captured by~\eqref{eq:observations} with $f(x) = \frac{e^x}{1 + e^x}$ and by~\eqref{eq:noisy 1 bit} with $Z_{i,j}$ i.i.d.\ according to the standard logistic distribution.
\end{example}

\begin{example}[Probit regression/Gaussian noise]
\label{ex:latentVariableSelection}
The probit regression model is captured by~\eqref{eq:observations} by setting $f(x) = 1- \Phi(-x/\sigma) = \Phi(x/\sigma)$ where $\Phi$ is the cumulative distribution function of a standard Gaussian and by~\eqref{eq:noisy 1 bit} with $Z_{i,j}$ i.i.d.\ according to a mean-zero Gaussian distribution with variance $\sigma^2$.
\end{example}

\subsection{Approximately low-rank matrices}
\label{ssec:approx}

The majority of the literature on matrix completion assumes that the first $r$ singular values of $\M$ are nonzero and the remainder are exactly zero.  However, in many applications the singular values instead exhibit only a gradual decay towards zero.  Thus, in this paper we allow a relaxation of the assumption that $\M$ has rank exactly $r$.  Instead, we assume that $\nucnorm{\M} \leq \alpha \sqrt{r d_1 d_2}$, where $\alpha$ is a parameter left to be determined, but which will often be of constant order.  In other words, the singular values of $\M$ belong to a scaled $\ell_1$ ball.  In compressed sensing, belonging to an $\ell_p$ ball with $p \in (0, 1]$ is a common relaxation of exact sparsity; in matrix completion, the nuclear-norm ball (or Schatten-1 ball) plays an analogous role.  See \cite{chatterjee2012matrix} for further compelling reasons to use the nuclear-norm ball relaxation.

The particular choice of scaling, $\alpha \sqrt{r d_1 d_2}$, arises from the following considerations.  Suppose that each entry of $\M$ is bounded in magnitude by $\alpha$ and that $\rank(\M) \leq r$.  Then
\[
    \nucnorm{\M} \leq \sqrt{r} \fronorm{\M} \leq \sqrt{r d_1 d_2} \inftynorm{\M} \leq \alpha \sqrt{r d_1 d_2}.
\]
Thus, the assumption that $\nucnorm{\M} \leq \alpha \sqrt{r d_1 d_2}$ is a relaxation of the conditions that $\rank(\M) \le r$ and $\inftynorm{\M} \le \alpha$.  The condition that $\inftynorm{\M} \le \alpha$ essentially means that the probability of seeing a $+1$ or $-1$ does not depend on the dimension.  It is also a way of enforcing that $\M$ should not be too ``spiky''; as discussed above this is an important assumption in order to make the recovery of $\M$ well-posed (e.g., see~\cite{negahban2010restricted}).

\section{Main results}
\label{sec:main}

We now state our main results.  We will have two goals---the first is to accurately recover $\M$ itself, and the second is to accurately recover the distribution of $\Y$ given by $f(\M)$.\footnote{Strictly speaking, $f(\M) \in [0,1]^{d_1\times d_2}$ is simply a matrix of scalars, but these scalars implicitly define the distribution of $\Y$, so we will sometimes abuse notation slightly and refer to $f(\M)$ as the distribution of $\Y$.}  All proofs are contained in the appendix.

\subsection{Convex programming}

In order to approximate either $\M$ or $f(\M)$, we will maximize the log-likelihood function of the optimization variable $\X$ given our observations subject to a set of convex constraints.  In our case, the log-likelihood function is given by
\[
    \loglike_{\Omega, \Y}(\X) := \sum_{(i,j) \in \Omega}  \left(\ind{Y_{i,j} = 1} \log(f(X_{i,j})) + \ind{Y_{i,j} = -1} \log(1 - f(X_{i,j})) \right).
\]
To recover $\M$, we will use the solution to the following program:
\begin{equation}
\label{eq:max likelihood with inf norm}
    \Mhat = \argmax_{\X} \loglike_{\Omega, \Y}(\X)  \qquad \mathrm{subject~to} \qquad \nucnorm{\mX} \leq \alpha \sqrt{r d_1 d_2} \quad \mathrm{and} \quad \inftynorm{\mX} \leq \alpha.
\end{equation}
To recover the distribution $f(\M)$, we need not enforce the infinity-norm constraint, and will use the following simpler program:
\begin{equation}
\label{eq:max likelihood}
    \Mhat = \argmax_{\X}  \loglike_{\Omega, \Y}(\X)  \qquad \mathrm{subject~to} \qquad \nucnorm{\X} \leq \alpha \sqrt{r d_1 d_2}
\end{equation}

In many cases, $\loglike_{\Omega, \Y}(\X)$ is a concave function and thus the above programs are convex.  This can be easily checked in the case of the logistic model and can also be verified in the case of the probit model (e.g., see~\cite{zymnis2010compressed}).

\subsection{Recovery of the matrix}
\label{ssec:recovery of the matrix}

We now state our main result concerning the recovery of the matrix $\M$.  As discussed in Section~\ref{sec:challenges} we place a ``non-spikiness'' condition on $\M$ to make recovery possible; we enforce this with an infinity-norm constraint. Further, some assumptions must be made on $f$ for recovery of $\M$ to be feasible.  We define two quantities $L_\alpha$ and $\beta_\alpha$ which control the ``steepness" and ``flatness" of $f$, respectively:
\begin{equation}
\label{eq:lipschitz}
    L_\alpha := \sup_{|x| \leq \alpha} \frac{\abs{f'(x)}}{f(x) (1 - f(x))} \qquad \mathrm{and} \qquad \beta_\alpha := \sup_{\abs{x} \leq \alpha} \frac{f(x) (1 - f(x)) }{(f'(x))^2}.
\end{equation}
In this paper we will restrict our attention to $f$ such that $L_\alpha$ and $\beta_\alpha$ are well-defined.  In particular, we assume that $f$ and $f'$ are non-zero in $[-\alpha, \alpha]$.  This assumption is fairly mild---for example, it includes the logistic and probit models (as we will see below in Remark~\ref{specific models}).  The quantity $L_\alpha$ appears only in our upper bounds, but it is generally well behaved.  The quantity $\beta_\alpha$ appears both in our upper and lower bounds. Intuitively, it controls the ``flatness" of $f$ in the interval $[-\alpha,\alpha]$---the flatter $f$ is, the larger $\beta_\alpha$ is.  It is clear that some dependence on $\beta_\alpha$ is necessary.  Indeed, if $f$ is perfectly flat, then the magnitudes of the entries of $\M$ cannot be recovered, as seen in the noiseless case discussed in Section \ref{sec:challenges}. Of course, when $\alpha$ is a fixed constant and $f$ is a fixed function, both $L_\alpha$ and $\beta_\alpha$ are bounded by fixed constants independent of the dimension.

\begin{theorem}
\label{thm:recover M}
Assume that $\nucnorm{\M} \leq \alpha \sqrt{d_1 d_2 r}$ and $\|\M\|_\infty \le \alpha$.  Suppose that $\Omega$ is chosen at random following the binomial model of Section \ref{sec:model} with $\E|\Omega| = \meas $.  Suppose that $\Y$ is generated as in~\eqref{eq:observations}.  Let $L_\alpha$ and $\beta_\alpha$ be as in~\eqref{eq:lipschitz}. Consider the solution $\Mhat$ to~\eqref{eq:max likelihood with inf norm}.  Then with probability at least $1 - C_1/(d_1+d_2)$,
\[
    \frac{1}{d_1 d_2}\|\Mhat - \M\|_F^2 \leq C_\alpha  \sqrt{\frac{r(d_1 + d_2)}{\meas }}\sqrt{1 + \frac{(d_1 + d_2) \log(d_1 d_2)}{\meas }}
\]
with $C_\alpha := C_2 \alpha L_\alpha \beta_\alpha $. If $\meas  \ge (d_1+d_2)\log(d_1 d_2)$ then this simplifies to
\begin{equation}
\label{eq:tight bound on M}
    \frac{1}{d_1 d_2}\|\Mhat - \M\|_F^2 \leq \sqrt{2} C_\alpha  \sqrt{\frac{r(d_1 + d_2)}{\meas }}.
\end{equation}
Above, $C_1$ and $C_2$ are absolute constants.
\end{theorem}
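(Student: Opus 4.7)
The plan follows the standard constrained M-estimation recipe. First, use the optimality of $\Mhat$ over the feasible set to produce a basic inequality that relates a Kullback-Leibler divergence between $f(\M)$ and $f(\Mhat)$ to an empirical-process fluctuation. Second, bound that fluctuation uniformly over the nuclear-norm ball by symmetrization, contraction, and random matrix theory. Third, convert the divergence bound back to a Frobenius bound using the curvature of $f$ on $[-\alpha,\alpha]$, which is exactly what $\beta_\alpha$ measures.

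For step one, set $\mathcal{F} := \{\X : \nucnorm{\X} \leq \alpha\sqrt{r d_1 d_2},\ \inftynorm{\X} \leq \alpha\}$. Since $\M \in \mathcal{F}$, optimality gives $\loglike_{\Omega,\Y}(\Mhat) \geq \loglike_{\Omega,\Y}(\M)$. Conditional on $\Omega$, a direct calculation yields $\E_{\Y}[\loglike_{\Omega,\Y}(\M) - \loglike_{\Omega,\Y}(\X)] = \sum_{(i,j)\in\Omega} D_{KL}(f(M_{ij})\|f(X_{ij}))$, so subtracting mean from sample and rearranging gives
\[
\sum_{(i,j)\in\Omega} D_{KL}(f(M_{ij})\|f(\hat{M}_{ij})) \leq \sup_{\X \in \mathcal{F}} \Bigl|\loglike_{\Omega,\Y}(\X) - \loglike_{\Omega,\Y}(\M) - \E_{\Y}\bigl[\loglike_{\Omega,\Y}(\X) - \loglike_{\Omega,\Y}(\M)\bigr]\Bigr|.
\]

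For step two, standard symmetrization replaces $Y_{ij}$ by Rademacher variables $\epsilon_{ij}$, and the Ledoux--Talagrand contraction principle then strips the $L_\alpha$-Lipschitz links $\log f$ and $\log(1-f)$ at the cost of a factor $L_\alpha$. Trace duality between the nuclear and operator norms gives
\[
\E\sup_{\X \in \mathcal{F}} \Bigl|\sum_{(i,j)\in\Omega} \epsilon_{ij} X_{ij}\Bigr| \leq \alpha\sqrt{r d_1 d_2} \cdot \E \Bigl\|\sum_{(i,j)\in\Omega} \epsilon_{ij}\, \ve_i \ve_j^*\Bigr\|,
\]
and the operator norm of the sparse random sign matrix on the right is $O\bigl(\sqrt{\meas(d_1+d_2)/(d_1 d_2)} + \sqrt{\log(d_1 d_2)}\bigr)$ by matrix Bernstein or Seginer's inequality; the additive log term is the source of the $\sqrt{1 + (d_1+d_2)\log(d_1 d_2)/\meas}$ factor in the theorem. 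A Bernstein-type concentration inequality upgrades the resulting expectation bound to hold with probability $1 - C_1/(d_1+d_2)$.

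For step three, a Taylor expansion of $d_H^2(f(x),f(y))$ in $y$ around $x$, together with the identity $\beta_\alpha = \sup_{|x|\leq\alpha} f(x)(1-f(x))/f'(x)^2$, produces the curvature lower bound $d_H^2(f(x),f(y)) \geq (x-y)^2/(C\beta_\alpha)$ valid for $|x|,|y|\leq \alpha$. Combined with the standard inequality $2 d_H^2 \leq D_{KL}$, this converts the divergence bound into $\|\Mhat - \M\|_F^2 \lesssim \beta_\alpha \sum_{i,j} D_{KL}(f(M_{ij})\|f(\hat{M}_{ij}))$, where the sum now runs over \emph{all} $(i,j) \in [d_1]\times[d_2]$. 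Reconciling this with the $\Omega$-restricted sum produced in step one is most cleanly handled by running the uniform deviation argument with the $\Omega$-expectation already taken (so a factor $\meas/(d_1 d_2)$ is introduced automatically); combining all three steps then gives the announced bound with $C_\alpha = C_2 \alpha L_\alpha \beta_\alpha$, and~\eqref{eq:tight bound on M} follows because $\meas \geq (d_1+d_2)\log(d_1 d_2)$ absorbs the log correction into a constant.

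The main technical obstacle is step two: obtaining the sharp $\sqrt{r(d_1+d_2)/\meas}$ rate requires a tight operator-norm estimate for the sparse random sign matrix, and the infinity-norm constraint $\inftynorm{\X}\leq\alpha$ is essential both for keeping $\log f$ and $\log(1-f)$ globally Lipschitz on the feasible range and for providing the right Bernstein variance proxy when passing from expectation to high probability. A secondary subtlety is the reconciliation between the $\Omega$-restricted divergence and the full Frobenius target in step three, which must be handled without losing an extra $\log$ factor.
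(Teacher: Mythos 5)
Your proposal is correct and follows essentially the same route as the paper: a basic inequality from optimality of $\Mhat$ reducing the problem to a KL bound, a uniform deviation bound over the nuclear-norm ball via symmetrization, Ledoux--Talagrand contraction (paying $L_\alpha$), trace duality, and Seginer's bound on the operator norm of the masked sign matrix, followed by the curvature inequality $d_H^2(f(x),f(y)) \gtrsim (x-y)^2/\beta_\alpha$ and $d_H^2 \le D$ to convert back to Frobenius error --- this is exactly the paper's decomposition into Theorem \ref{thm:recover distribution with constraints} plus the Taylor-expansion lemma. The one place you deviate is the high-probability step: you invoke a Bernstein-type concentration inequality for the supremum, whereas the paper deliberately avoids bounded-differences/Bernstein-style arguments (see Remark \ref{remark:generror}) and instead bounds the $h$-th moment of the supremum with $h = \log(d_1+d_2)$ and applies Markov. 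Your route is viable here precisely because the $\inftynorm{\X}\le\alpha$ constraint of \eqref{eq:max likelihood with inf norm} gives a bounded envelope, as you note; the paper's moment method is chosen so that the same lemma also covers Theorem \ref{thm:recover distribution}, where no infinity-norm constraint is available and bounded differences fails. Your handling of the $\Omega$-restricted versus full-matrix divergence (taking the expectation over $\Omega$ inside the deviation argument, which produces the $\meas/(d_1d_2)$ factor) is also exactly what the paper does.
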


Note that the theorem also holds if $\Omega = [d_1] \times [d_2]$, i.e., if we sample each entry exactly once or observe a complete realization of $\Y$.  Even in this context, the ability to accurately recover $\M$ is somewhat surprising.

\begin{remark}[Recovery in the logistic and probit models]\label{specific models}
The logistic model satisfies the hypotheses of Theorem~\ref{thm:recover M} with $\beta_\alpha =  \frac{(1 + e^\alpha)^2}{e^\alpha} \approx e^{\alpha}$ and $L_\alpha = 1$. The probit model has
\[
    \beta_\alpha \leq c_1 \sigma^2 e^{\frac{\alpha^2}{2 \sigma^2}} \quad \text{and} \quad L_\alpha \leq c_2 \frac{\frac{\alpha}{\sigma} + 1}{\sigma}
\]
where we can take $c_1 = \pi$ and $c_2 = 8$.  In particular, in the probit model the bound in~\eqref{eq:tight bound on M} reduces to
\begin{equation}
\label{eq:probit tight bound}
    \frac{1}{d_1 d_2}\|\Mhat - \M\|_F^2 \leq C \left(\frac{\alpha}{\sigma} + 1\right) \exp\left(\frac{\alpha^2}{2 \sigma^2}\right) \sigma \alpha \sqrt{\frac{r(d_1 + d_2)}{\meas }}.
\end{equation}
Hence, when $\sigma < \alpha$, increasing the size of the noise leads to significantly improved error bounds---this is not an artifact of the proof. We will see in Section \ref{ssec:optimality} that the exponential dependence on $\alpha$ in the logistic model (and on $\alpha/\sigma$ in the probit model) is intrinsic to the problem.  Intuitively, we should expect this since for such models, as $\| \M \|_\infty$ grows large, we can essentially revert to the noiseless setting where estimation of $\M$ is impossible.  Furthermore, in Section \ref{ssec:optimality} we will also see that when $\alpha$ (or $\alpha/\sigma$) is bounded by a constant, the error bound \eqref{eq:tight bound on M} is optimal up to a constant factor.  Fortunately, in many applications, one would expect $\alpha$ to be small, and in particular to have little, if any, dependence on the dimension.  This ensures that each measurement will always have a non-vanishing probability of returning $1$ as well as a non-vanishing probability of returning $-1$.
\end{remark}

\begin{remark}[Nuclear norm constraint]
The assumption made in Theorem~\ref{thm:recover M} (as well as Theorem~\ref{thm:recover distribution} below) that $\nucnorm{\M} \leq \alpha \sqrt{d_1 d_2 r}$ does {\em not} mean that we are requiring the matrix $\M$ to be low rank.  We express this constraint in terms of $r$ to aid the intuition of researchers well-versed in the existing literature on low-rank matrix recovery.  (If $\M$ is exactly rank $r$ and satisfies $\|\M\|_\infty \le \alpha$, then as discussed in Section~\ref{ssec:approx}, $\M$ will automatically satisfy this constraint.)  If one desires, one may simplify the presentation by replacing $\alpha \sqrt{d_1 d_2 r}$ with a parameter $\lambda$ and simply requiring $\nucnorm{\M} \leq \lambda$, in which case \eqref{eq:tight bound on M} reduces to
\[ d_H^2(f(\Mhat),f(\M)) \leq C_3 L_\alpha \beta_\alpha \frac{\lambda}{\sqrt{\min(d_1, d_2) \cdot \meas}}\]
for a numerical constant $C_3$.
\end{remark}

\subsection{Recovery of the distribution}
\label{ssec:distribution}

In many situations, we might not be interested in the underlying matrix $\M$, but rather in determining the distribution of the entries of $\Y$.  For example, in recommender systems, a natural question would be to determine the likelihood that a user would enjoy a particular unrated item.

Surprisingly, this distribution may be accurately recovered without any restriction on the infinity-norm of $\M$.  This may be unexpected to those familiar with the matrix completion literature in which ``non-spikiness'' constraints seem to be unavoidable. In fact, we will show in Section \ref{ssec:optimality} that the bound in Theorem~\ref{thm:recover distribution} is near-optimal; further, we will show that even under the added constraint that $\inftynorm{\M} \leq \alpha$, it would be impossible to estimate $f(\M)$ significantly more accurately.

\begin{theorem}
\label{thm:recover distribution}
Assume that $\nucnorm{\M} \leq \alpha \sqrt{d_1 d_2 r}$.  Suppose that $\Omega$ is chosen at random following the binomial model of Section \ref{sec:model} with $\E|\Omega| = \meas $.  Suppose that $\Y$ is generated as in~\eqref{eq:observations}, and let $L = \lim_{\alpha \rightarrow \infty} L_\alpha$. Let $\Mhat$ be the solution to~\eqref{eq:max likelihood}.  Then with probability at least $1 - C_1/(d_1+d_2)$,
\begin{equation}
\label{eq:distribution error bound}
    d_H^2(f(\Mhat), f(\M) ) \leq C_2 \alpha L \sqrt{\frac{r(d_1 + d_2)}{\meas }}\sqrt{1 + \frac{(d_1 + d_2) \log(d_1 d_2)}{\meas }}.
\end{equation}
Furthermore, as long as $\meas  \geq (d_1 + d_2) \log(d_1 d_2)$, we have
\begin{equation}
\label{eq:simple distribution error bound}
    d_H^2(f(\Mhat),f(\M)) \leq \sqrt{2} C_2 \alpha L \sqrt{\frac{r(d_1 + d_2)}{\meas }}.
\end{equation}
Above, $C_1$ and $C_2$ are absolute constants.
\end{theorem}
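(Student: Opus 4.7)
The plan is to follow the standard recipe for a constrained maximum-likelihood estimator. Since $\nucnorm{\M} \leq \alpha\sqrt{d_1 d_2 r}$, the target $\M$ is feasible for \eqref{eq:max likelihood}, so $\loglike_{\Omega,\Y}(\Mhat) \geq \loglike_{\Omega,\Y}(\M)$. For any \emph{fixed} $\X$, a direct computation using the binomial sampling model gives
\begin{equation*}
    \E\bigl[\loglike_{\Omega,\Y}(\M) - \loglike_{\Omega,\Y}(\X)\bigr] \;=\; \frac{\meas}{d_1 d_2} \sum_{i,j} \mathrm{KL}\!\bigl(f(M_{i,j}) \,\|\, f(X_{i,j})\bigr) \;\geq\; 2\meas\, d_H^2\!\bigl(f(\M), f(\X)\bigr),
\end{equation*}
where I use the classical inequality $\mathrm{KL}(p\|q) \geq 2 d_H^2(p,q)$ for binary distributions, together with the paper's normalization of $d_H^2$ on matrices as an entrywise average. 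Setting $W(\X) := \loglike_{\Omega,\Y}(\M) - \loglike_{\Omega,\Y}(\X) - \E[\loglike_{\Omega,\Y}(\M) - \loglike_{\Omega,\Y}(\X)]$ (a mean-zero process over fixed $\X$) and plugging in $\X = \Mhat$, optimality yields
\begin{equation*}
    2\meas\, d_H^2(f(\Mhat), f(\M)) \;\leq\; -W(\Mhat) \;\leq\; \sup_{\X \in \mathcal{B}} |W(\X)|, \qquad \mathcal{B} := \bigl\{\X : \nucnorm{\X} \leq \alpha\sqrt{d_1 d_2 r}\bigr\}.
\end{equation*}
The entire problem thus reduces to a uniform deviation bound for a centered empirical process indexed by the nuclear-norm ball $\mathcal{B}$.

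Next I would control $\E \sup_{\X \in \mathcal{B}} |W(\X)|$ via Rademacher complexity. After symmetrization it suffices to bound
\begin{equation*}
    \E \sup_{\X \in \mathcal{B}} \sum_{i,j} \xi_{i,j}\,\epsilon_{i,j}\bigl[\ell(X_{i,j}, Y_{i,j}) - \ell(M_{i,j}, Y_{i,j})\bigr],
\end{equation*}
where $\xi_{i,j} := \ind{(i,j)\in\Omega}$, the $\epsilon_{i,j}$ are i.i.d.\ Rademacher signs, and $\ell(x,y) := -\ind{y=1}\log f(x) - \ind{y=-1}\log(1-f(x))$. The choice $L = \lim_{\alpha\to\infty} L_\alpha$ makes $\ell(\cdot, y)$ globally $L$-Lipschitz on $\R$ for both $y = \pm 1$, so the Ledoux--Talagrand contraction principle strips off $\ell$ and, together with duality of the nuclear and operator norms, leaves
\begin{equation*}
    L\,\alpha\sqrt{d_1 d_2 r}\; \E\,\opnorm{\mat{R}}, \qquad \mat{R} := (\xi_{i,j}\epsilon_{i,j})_{i,j}.
\end{equation*}
A standard matrix Bernstein or noncommutative Khintchine estimate gives $\E\opnorm{\mat{R}} \lesssim \sqrt{\meas(d_1+d_2)/(d_1 d_2)} + \log(d_1+d_2)$, and once fed back into the chain of inequalities above this produces exactly the two terms appearing under the product of square roots in \eqref{eq:distribution error bound}.

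The last ingredient is lifting the expected bound to a high-probability statement with failure $1 - C_1/(d_1+d_2)$, via a Talagrand-type concentration inequality applied to $\sup_{\X \in \mathcal{B}} |W(\X)|$. The main obstacle to watch for is precisely that, with no $\inftynorm{\X}$ constraint in \eqref{eq:max likelihood}, each summand $\ell(X_{i,j}, Y_{i,j})$ is a priori unbounded, which is awkward for standard bounded-differences concentration. The key observation is that only the \emph{increments} $\ell(X_{i,j}, Y_{i,j}) - \ell(M_{i,j}, Y_{i,j})$ enter the centered process $W$, and global Lipschitz continuity controls these entrywise by $L\abs{X_{i,j} - M_{i,j}}$; combined with the fact that the Hellinger distance is intrinsically bounded by $1$, one sidesteps any dependence on $\inftynorm{\X}$. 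This is exactly the reason the $\inftynorm{\M} \leq \alpha$ constraint required in Theorem~\ref{thm:recover M} can be dropped when one aims only to estimate the distribution $f(\M)$ rather than $\M$ itself. Finally, \eqref{eq:simple distribution error bound} follows from \eqref{eq:distribution error bound} in the regime $\meas \geq (d_1+d_2)\log(d_1 d_2)$, since there the second square root is bounded by $\sqrt 2$.
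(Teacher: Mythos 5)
Your overall architecture matches the paper's: feasibility of $\M$ plus optimality of $\Mhat$ gives a basic inequality, the expected excess log-likelihood is $\meas$ times a KL divergence which dominates the Hellinger distance, and everything reduces to a uniform deviation bound over the nuclear-norm ball handled by symmetrization, Ledoux--Talagrand contraction, nuclear/operator duality, and an operator-norm estimate for the masked sign matrix. However, your final step --- lifting the expectation bound to high probability via ``a Talagrand-type concentration inequality'' --- is exactly the step that breaks, and your proposed fix does not repair it. Talagrand's inequality (or bounded differences) needs a uniform bound on the individual summands of the centered process, and the increments you invoke, $L\abs{X_{i,j}-M_{i,j}}$, are \emph{not} bounded over $\mathcal{B}$: with no $\ell_\infty$ constraint, $\inftynorm{\X}$ can be as large as $\alpha\sqrt{r d_1 d_2}$, so a single coordinate flip can move the supremum by order $\alpha\sqrt{r d_1 d_2}$, which destroys the bound. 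The observation that $d_H^2 \le 1$ constrains the target quantity, not the fluctuations of the empirical process, so it does not help. The paper sidesteps this (and explicitly flags the failure of bounded differences in Remark~\ref{remark:generror}) by running the entire symmetrization--contraction--duality chain at the level of $h$-th moments with $h=\log(d_1+d_2)$ and then applying Markov's inequality; that moment argument is the essential technical ingredient missing from your proposal. (The paper also first proves the result with an $\inftynorm{\X}\le\gamma$ constraint and Lipschitz constant $L_\gamma$, then lets $\gamma\to\infty$; your direct use of $L$ is an acceptable shortcut when $L<\infty$.)

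A secondary issue: ``standard matrix Bernstein or noncommutative Khintchine'' does not give $\E\opnorm{\mat{R}} \lesssim \sqrt{\meas(d_1+d_2)/(d_1 d_2)} + \log(d_1+d_2)$; those tools attach a $\sqrt{\log(d_1+d_2)}$ factor to the variance term, which would contaminate the leading term of \eqref{eq:distribution error bound} with an extra $\sqrt{\log}$. The paper needs Seginer's theorem, combined with a Bernstein-plus-exponential-tail argument controlling the $h$-th moments of the maximal row and column sums of $\Delta_\Omega$, to obtain the log-free leading term (and, again, it needs this at the level of $h$-th moments, not just in expectation). Finally, a cosmetic point: under the paper's normalization of $d_H^2$ the correct inequality is $D(p\|q)\ge d_H^2(p,q)$, not $D\ge 2d_H^2$; this only shifts an absolute constant.
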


While $L=1$ for the logistic model, the astute reader will have noticed that for the probit model $L$ is unbounded---that is, $L_\alpha$ tends to $\infty$ as $\alpha \to \infty$.  $L$ would also be unbounded for the case where $f(x)$ takes values of $1$ or $0$ outside of some range (as would be the case in \eqref{eq:noisy 1 bit} if the distribution of the noise had compact support).   Fortunately, however, we can recover a result for these cases by enforcing an infinity-norm constraint, as described in Theorem \ref{thm:recover distribution with constraints} below.  Moreover, for a large class of functions, $f$, $L$ is indeed bounded.  For example, in the latent variable version of \eqref{eq:noisy 1 bit} if the entries $Z_{i,j}$ are at least as fat-tailed as an exponential random variable, then $L$ is bounded.  To be more precise, suppose that $f$ is continuously differentiable and for simplicity assume that the distribution of $Z_{i,j}$ is symmetric and $\abs{f'(x)}/(1 -f(x))$ is monotonic for $x$ sufficiently large.  If $\Pr{\abs{Z_{i,j}} \geq t} \geq C \exp(- c t)$ for all $t \geq 0$, then one can show that $L$ is finite.  This property is also essentially equivalent to the requirement that a distribution have bounded {\em hazard rate}. As noted above, this property holds for the logistic distribution, but also for many other common distributions, including the Laplacian, Student's $t$, Cauchy, and others.

\subsection{Room for improvement?}
\label{ssec:optimality}

We now discuss the extent to which Theorems~\ref{thm:recover M} and~\ref{thm:recover distribution} are optimal.  We give three theorems, all proved using information theoretic methods, which show that these results are nearly tight, even when some of our assumptions are relaxed.  Theorem \ref{thm:one_bit_lowerbound} gives a lower bound to nearly match the upper bound on the error in recovering $\M$ derived in Theorem \ref{thm:recover M}.  Theorem \ref{thm:lowerbound} compares our upper bounds to those available without discretization and shows that very little is lost when discretizing to a single bit. Finally, Theorem \ref{thm:distribution lower bound} gives a lower bound matching, up to a constant factor, the upper bound on the error in recovering the distribution $f(\M)$ given in Theorem \ref{thm:recover distribution}.  Theorem \ref{thm:distribution lower bound} also shows that Theorem \ref{thm:recover distribution} does not suffer by dropping the canonical ``spikiness" constraint.

Our lower bounds require a few assumptions, so before we delve into the bounds themselves, we briefly argue that these assumptions are rather innocuous.  First, without loss of generality (since we can always adjust $f$ to account for rescaling $\M$), we assume that $\alpha \ge 1$.  Next, we require that the parameters be sufficiently large so that
\begin{equation}
\label{eq:alphareq}
    \alpha^2 r  \max\{d_1,d_2\}  \geq  C_0
\end{equation}
for an absolute constant $C_0$.  Note that we could replace this with a simpler, but still mild, condition that $d_1 > C_0$.  Finally, we also require that $r \geq c$ where $c$ is either 1 or 4 and that $r \leq O( \min\{d_1,d_2\}/\alpha^2 )$, where $O(\cdot)$ hides parameters (which may differ in each Theorem) that we make explicit below.  This last assumption simply means that we are in the situation where $r$ is significantly smaller than $d_1$ and $d_2$, i.e., the matrix is of approximately low rank.

In the following, let
\begin{equation}
\label{eq:defofK}
    K = \left\{ \M\ : \ \nucnorm{\M} \leq \alpha \sqrt{r d_1d_2}, \|\M\|_\infty \leq \alpha \right\}
\end{equation}
denote the set of matrices whose recovery is guaranteed by Theorem \ref{thm:recover M}.

\subsubsection{Recovery from 1-bit measurements}

\begin{theorem}\label{thm:one_bit_lowerbound}
Fix $\alpha, r,d_1,$ and $d_2$ to be such that $r \ge 4$ and~\eqref{eq:alphareq} holds. Let $\beta_{\alpha}$ be defined as in \eqref{eq:lipschitz}, and suppose that $f'(x)$ is decreasing for $x > 0$. Let $\Omega$ be any subset of $[d_1]\times [d_2]$ with cardinality $\meas $, and let $\Y$ be as in \eqref{eq:observations}. Consider any algorithm which, for any $\M \in K$, takes as input $Y_{i,j}$ for $(i,j) \in \Omega$ and returns $\widehat{\M}$.  Then there exists $\M \in K$ such that with probability at least $3/4$,
\begin{equation}\label{eq:one-bit lower bound}
    \frac{1}{d_1d_2} \| \M - \widehat{\M} \|_{F}^2 \geq \min\left\{C_1, C_2 \alpha\sqrt{\beta_{\frac{3}{4} \alpha}} \sqrt{\frac{r \max\{d_1,d_2\}}{\meas } }\right\}
\end{equation}
as long as the right-hand side of~\eqref{eq:one-bit lower bound} exceeds $r \alpha^2/\min(d_1,d_2)$.  Above, $C_1$ and $C_2$ are absolute constants.\footnote{Here and in the theorems below, the choice of $3/4$ in the probability bound is arbitrary, and can be adjusted at the cost of changing $C_0$ in~\eqref{eq:alphareq} and $C_1$ and $C_2$.  Similarly, $\beta_{\frac{3}{4} \alpha}$ can be replaced by $\beta_{(1 - \epsilon) \alpha}$ for any $\epsilon > 0$.}
\end{theorem}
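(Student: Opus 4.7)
The lower bound is proved by the standard minimax machinery: combining a Gilbert-Varshamov-style packing inside $K$ with Fano's inequality applied to the binary observation model.

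First, I would construct a packing $\{\M^{(1)},\dots,\M^{(N)}\}\subset K$ of rank-$r$ matrices satisfying (i) $\log N \gtrsim r\,\max(d_1,d_2)$ and (ii) pairwise normalized squared Frobenius separation $\tfrac{1}{d_1 d_2}\|\M^{(i)}-\M^{(j)}\|_F^2 \gtrsim \gamma^2$ for a scale parameter $\gamma > 0$ to be chosen. The standard block construction works: partition the smaller of the two dimensions into $r$ equal blocks, let each block repeat an independent row drawn from $\{\pm 1\}^{\max(d_1,d_2)}$, and apply Gilbert-Varshamov on the resulting $r\max(d_1,d_2)$-bit parameterization to extract an exponentially large subfamily whose block-sign patterns have normalized Hamming distance bounded below by a constant. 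Rescaling by $\gamma \le \tfrac{3}{4}\alpha$ ensures the matrices lie in $K$: the infinity-norm constraint is $\gamma\le\alpha$, and the nuclear-norm constraint $\|\cdot\|_*\le \alpha\sqrt{rd_1d_2}$ holds automatically for rank-$r$ matrices with $\|\cdot\|_\infty\le\gamma$.

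Second, I would bound the KL divergence between the Bernoulli-product observation distributions $\P_i,\P_j$ associated with any two packing elements. This divergence factorizes across $\Omega$,
\[
\mathrm{KL}(\P_i\|\P_j) \;=\; \sum_{(k,\ell)\in\Omega} \mathrm{KL}\bigl(\mathrm{Bern}(f(M^{(i)}_{k,\ell}))\,\big\|\,\mathrm{Bern}(f(M^{(j)}_{k,\ell}))\bigr),
\]
and since each entry lies in $\{\pm\gamma\}\subset[-\tfrac{3}{4}\alpha,\tfrac{3}{4}\alpha]$, each summand is bounded via $(p-q)^2/[q(1-q)]$ together with a mean-value expansion of $f$. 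The hypothesis that $f'$ is decreasing on $(0,\infty)$ pins down where the Fisher information $(f')^2/[f(1-f)]$ achieves its minimum on the interval, which in turn allows me to connect the per-entry Bernoulli KL to $1/\beta_{3\alpha/4}$ via the supremum defining $\beta$. The resulting per-entry bound is of order $\gamma^2/\beta_{3\alpha/4}$, giving $\mathrm{KL}(\P_i\|\P_j) \lesssim n\gamma^2/\beta_{3\alpha/4}$.

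Third, I would apply Fano's inequality: the minimax probability of recovering the index $i$ cannot drop below $3/4$ unless $n\gamma^2/\beta_{3\alpha/4} \gtrsim \log N \gtrsim r\max(d_1,d_2)$, which forces $\gamma^2 \lesssim \beta_{3\alpha/4}\,r\max(d_1,d_2)/n$. The lower bound on the normalized squared Frobenius error is then $\gtrsim \gamma^2$, and saturating this Fano bound subject to the ceiling $\gamma\le \tfrac{3}{4}\alpha$ produces a bound of the form $\min\{c\alpha^2,\,c'\beta_{3\alpha/4}\,r\max(d_1,d_2)/n\}$; the theorem's geometric-mean expression $\alpha\sqrt{\beta_{3\alpha/4}\,r\max(d_1,d_2)/n}$ then arises in the regime guaranteed by the auxiliary assumption that the RHS exceed $r\alpha^2/\min(d_1,d_2)$, which rules out the degenerate case in which the packing separation is dominated by unavoidable rank-approximation error.

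The main technical obstacle is the KL bound in Step 2. Since $\beta_{3\alpha/4}$ is defined as a \emph{supremum} of $f(1-f)/(f')^2$, it gives directly only a \emph{lower} bound on Fisher information, whereas Fano requires an \emph{upper} bound on KL. The monotonicity of $f'$ on $(0,\infty)$ is essential to bridge this gap: it forces the Fisher information to attain its minimum at the endpoints $\pm\tfrac{3}{4}\alpha$, where it equals $1/\beta_{3\alpha/4}$, and this minimum is exactly what enters the desired per-entry KL estimate. Tracking the constants carefully so that the final bound genuinely carries the $\alpha\sqrt{\beta_{3\alpha/4}}$ factor (and not a weaker version) is the most delicate part of the argument; this is also where the mild numerical assumptions (e.g., $r\ge 4$ and $\alpha^2 r\max(d_1,d_2)\ge C_0$) are used, to keep the packing large enough and the constraint set $K$ rich enough to accommodate the Gilbert-Varshamov subfamily.
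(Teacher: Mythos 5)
There is a genuine gap, and it is precisely at the point you flag as ``the main technical obstacle.'' Your packing places every entry at $\pm\gamma$, symmetrically about zero. The per-entry KL you must then control is $D\bigl(f(\gamma)\,\|\,f(-\gamma)\bigr)$, and the mean-value point $\xi$ in the expansion $(f(\gamma)-f(-\gamma))^2 = (f'(\xi))^2(2\gamma)^2$ lies in $[-\gamma,\gamma]$ --- an interval containing $0$, where $f'$ is \emph{largest} under the stated monotonicity hypothesis. So the Fisher-information factor you pick up is on the order of $1/\beta_0$, not $1/\beta_{3\alpha/4}$; for the logistic model these differ by a factor of roughly $e^{3\alpha/4}$. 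Monotonicity of $f'$ tells you the information is minimized at the endpoints of whatever interval the entries actually occupy, but your entries do not occupy a neighborhood of $\pm\tfrac34\alpha$ --- they straddle the origin. (Taking $\gamma=\tfrac34\alpha$ does not help: then $f(\gamma)$ and $f(-\gamma)$ are far apart and the KL is large outright.) The paper's proof resolves this with an idea absent from your outline: it builds the packing at scale $\alpha\gamma/2$ and then \emph{shifts} it by $\alpha(1-\gamma/2)\bone$, so that every entry lies in $\{(1-\gamma)\alpha,\,\alpha\}$ with $\gamma\le 1/4$. The two values being distinguished are then both inside $[\tfrac34\alpha,\alpha]$ and only $\gamma\alpha$ apart, which pins the mean-value point into the flat region of $f$ and yields the per-entry bound $(\gamma\alpha)^2/\beta_{\alpha'}$ with $\alpha'=(1-\gamma)\alpha\ge\tfrac34\alpha$.

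A second, independent discrepancy is in the packing cardinality. You take $r$ blocks and get $\log N\gtrsim r\max(d_1,d_2)$ independent of $\gamma$; running Fano with that gives a bound of the form $\beta\, r\max(d_1,d_2)/\meas$ (linear, not the geometric mean), and no choice of $\gamma$ afterwards produces the theorem's $\alpha\sqrt{\beta_{3\alpha/4}}\sqrt{r\max(d_1,d_2)/\meas}$. The paper's Lemma~\ref{lem:existsX} instead uses $B=r/\gamma^2$ blocks of entries of magnitude $\alpha\gamma$ --- shrinking the entries buys extra rank at fixed nuclear norm $\alpha\sqrt{rd_1d_2}$ --- so that $\log|\mathcal{X}|\gtrsim rd_2/\gamma^2$ grows as $\gamma$ shrinks. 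Balancing $\meas(\gamma\alpha)^2/\beta_{\alpha'}$ against $rd_2/\gamma^2$ is exactly what produces the square-root form, with the separation $\alpha^2\gamma^2 d_1d_2$ supplying the final error scale. Both the shift and the $\gamma$-dependent rank are needed; as written, your argument would establish a quantitatively different (and, in the regime of large $\alpha$, much weaker) statement than Theorem~\ref{thm:one_bit_lowerbound}.
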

The requirement that the right-hand side of~\eqref{eq:one-bit lower bound} be larger than $r \alpha^2/\min(d_1,d_2)$ is satisfied as long as $r \leq O( \min\{d_1,d_2\}/\alpha^2 )$.  In particular, it is satisfied whenever
\[
    r \leq C_3 \frac{\min(1, \beta_0) \cdot \min(d_1, d_2)}{\alpha^2}
\]
for a fixed constant $C_3$. Note also that in the latent variable model in \eqref{eq:noisy 1 bit}, $f'(x)$ is simply the probability density of $Z_{i,j}$.  Thus, the requirement that $f'(x)$ be decreasing is simply asking the probability density to have decreasing tails.  One can easily check that this is satisfied for the logistic and probit models.

Note that if $\alpha$ is bounded by a constant and $f$ is fixed (in which case $\beta_\alpha$ and $\beta_{\alpha'}$ are bounded by a constant), then the lower bound of Theorem \ref{thm:one_bit_lowerbound} matches the upper bound given in~\eqref{eq:tight bound on M} up to a constant.  When $\alpha$ is not treated as a constant, the bounds differ by a factor of $\sqrt{\beta_\alpha}$.  In the logistic model $\beta_\alpha \approx e^\alpha$ and so this amounts to the difference between $e^{\alpha/2}$ and $e^{\alpha}$.  The probit model has a similar change in the constant of the exponent.

\subsubsection{Recovery from unquantized measurements}

Next we show that, surprisingly, very little is lost by discretizing to a single bit. In Theorem \ref{thm:lowerbound}, we consider an ``unquantized" version of the latent variable model in \eqref{eq:noisy 1 bit} with Gaussian noise.  That is, let $\Z$ be a matrix of i.i.d.\ Gaussian random variables, and suppose the noisy entries $M_{i,j} + Z_{i,j}$ are observed directly, without discretization. In this setting, we give a lower bound that still nearly matches the upper bound given in Theorem \ref{thm:recover M}, up to the $\beta_\alpha$ term.

\begin{theorem}
\label{thm:lowerbound}
Fix $\alpha, r,d_1,$ and $d_2$ to be such that $r \ge 1$ and~\eqref{eq:alphareq} holds. Let $\Omega$ be any subset of $[d_1]\times [d_2]$ with cardinality $\meas $, and let $\Z$ be a $d_1\times d_2$ matrix with i.i.d.\ Gaussian entries with variance $\sigma^2$. Consider any algorithm which, for any $\M \in K$, takes as input $Y_{i,j} = M_{i,j} + Z_{i,j}$ for $(i,j) \in \Omega$ and returns $\widehat{\M}$.  Then there exists $\M \in K$ such that with probability at least $3/4$,
\begin{equation}
\label{eq:infinite bit lower bound}
    \frac{1}{d_1d_2} \| \M - \widehat{\M} \|_{F}^2 \geq \min\left\{ C_1,  C_2 \alpha \sigma  \sqrt{\frac{r \max\{d_1, d_2\}}{\meas }}\right\}
\end{equation}
as long as the right-hand side of~\eqref{eq:infinite bit lower bound} exceeds $r \alpha^2/\min(d_1, d_2)$.  Above, $C_1$ and $C_2$ are absolute constants.
\end{theorem}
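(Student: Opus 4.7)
The plan is to prove this lower bound via a minimax argument based on Fano's inequality applied to a well-chosen packing of $K$. The three ingredients are (i) a packing $\mathcal{M}_\gamma \subset K$ of matrices well-separated in Frobenius norm, (ii) an upper bound on the Kullback--Leibler divergence between the Gaussian observation distributions induced by distinct packing elements, and (iii) Fano's inequality to convert the KL bound into a lower bound on the testing error, and hence on estimation error.

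For the packing, assume without loss of generality that $d_2 = \max(d_1, d_2)$, and fix a scale parameter $\gamma \in (0, \alpha]$ to be chosen later. Partition $[d_1]$ into $r$ contiguous row-blocks $I_1, \ldots, I_r$ of equal size, and for each sign tuple $\vec{\epsilon} = (\vec{\epsilon}_1, \ldots, \vec{\epsilon}_r) \in \{-1, +1\}^{r d_2}$ define $\M_{\vec{\epsilon}} \in \mathbb{R}^{d_1 \times d_2}$ by setting its $i$-th row equal to $\gamma\, \vec{\epsilon}_k^\top$ whenever $i \in I_k$. Each such matrix has rank at most $r$, entries in $\{\pm \gamma\}$, and nuclear norm at most $\gamma \sqrt{r d_1 d_2} \leq \alpha \sqrt{r d_1 d_2}$, hence lies in $K$. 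A standard Gilbert--Varshamov extraction on the Hamming cube $\{\pm 1\}^{r d_2}$ produces a subset $\mathcal{M}_\gamma$ of size at least $\exp(c\, r\, \max(d_1, d_2))$ whose elements pairwise differ in a constant fraction of sign coordinates, giving the per-pair separation $\|\M_{\vec{\epsilon}} - \M_{\vec{\epsilon}'}\|_F^2 \geq c'\, \gamma^2\, d_1 d_2$.

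For the KL step, the Gaussian observation model gives $\mathrm{KL}(\mathbb{P}_{\M} \| \mathbb{P}_{\M'}) = \frac{1}{2\sigma^2} \sum_{(i,j) \in \Omega} (M_{ij} - M'_{ij})^2 \leq 2 n \gamma^2 / \sigma^2$ uniformly over pairs in the packing, because entries of any two packing members differ by at most $2\gamma$ in absolute value. Crucially, this bound depends on $\Omega$ only through $n = |\Omega|$, which is what allows the theorem to quantify over arbitrary (even adversarially chosen) $\Omega$. Fano's inequality then guarantees that any estimator mis-identifies the true packing element with probability at least $1/4$ whenever $n \gamma^2 / \sigma^2 \lesssim r\max(d_1, d_2)$, and the packing's Frobenius separation converts this into a per-entry squared-error lower bound of order $\gamma^2$ with probability at least $3/4$.

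The final step is to optimize $\gamma$, and this is where I expect the main technical difficulty. The two active constraints are $\gamma \leq \alpha$ (from $\M_{\vec{\epsilon}} \in K$) and $\gamma^2 \lesssim \sigma^2 r \max(d_1, d_2)/n$ (from Fano). A naive balancing yields a rate of the form $\min\{\alpha^2, c\,\sigma^2 r\max(d_1,d_2)/n\}$, whereas the theorem claims the strictly larger geometric-mean form $\alpha\sigma \sqrt{r\max(d_1,d_2)/n}$. The outer minimum with the absolute constant $C_1$ and the side condition that the right-hand side exceed $r\alpha^2/\min(d_1,d_2)$ are engineered precisely to restrict attention to the regime where the two Fano constraints can be balanced against each other so that the packing argument's scale coincides with the claimed rate; outside that regime, the $C_1$ branch of the minimum absorbs the degeneracy. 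Careful bookkeeping of the absolute constants $C_0, C_1, C_2$ and verification that the Gilbert--Varshamov-selected packing elements all lie in $K$ simultaneously constitute the remaining routine work.
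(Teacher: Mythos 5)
There is a genuine gap, and it is exactly where you flagged the ``main technical difficulty.'' Your packing uses a \emph{fixed} number $r$ of row-blocks with entries $\pm\gamma$, so its cardinality is $\exp(c\,r\max(d_1,d_2))$ \emph{independently of} $\gamma$. Running Fano with that packing forces $n\gamma^2/\sigma^2 \lesssim r\max(d_1,d_2)$ and $\gamma\le\alpha$, which yields only $\min\{\alpha^2,\;c\,\sigma^2 r\max(d_1,d_2)/n\}$. This is strictly weaker than the claimed rate $\alpha\sigma\sqrt{r\max(d_1,d_2)/n}$ (the geometric mean always dominates the minimum) except in the knife-edge case $\alpha^2=\sigma^2 r\max(d_1,d_2)/n$, and the theorem's side conditions do \emph{not} restrict to that case: the requirement that the right-hand side exceed $r\alpha^2/\min(d_1,d_2)$ and the outer $\min$ with $C_1$ still leave a wide regime (e.g.\ $\sigma\sqrt{r d_2/n}\ll\alpha$) where your bound falls short of the target by the factor $\sigma\sqrt{rd_2/n}/\alpha$. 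So the proposed resolution of the difficulty does not work.

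The missing idea is to let the packing \emph{grow} as the entry magnitude shrinks, exploiting that $K$ is defined by a nuclear-norm constraint rather than a rank constraint. The paper's Lemma~\ref{lem:existsX} builds matrices out of a random sign block of height $B=r/\gamma^2\ge r$ (with entries $\pm\alpha\gamma$) tiled down the rows; these matrices have rank up to $r/\gamma^2\gg r$, yet still satisfy $\nucnorm{\X}\le\sqrt{B}\,\fronorm{\X}=\alpha\sqrt{rd_1d_2}$, and the packing has size $\exp\bigl(rd_2/(16\gamma^2)\bigr)$, which increases as $\gamma$ decreases. Writing $\delta=\alpha\gamma$ for the entry magnitude, Fano now requires $n\delta^2/\sigma^2\lesssim r\alpha^2 d_2/\delta^2$, i.e.\ $\delta^2\lesssim\alpha\sigma\sqrt{rd_2/n}$, which is precisely the geometric-mean rate. (The paper controls the information term via a mutual-information bound, Lemma~\ref{lem:mutualInformation}, rather than your pairwise Gaussian KL computation, but that difference is cosmetic; your KL step is fine. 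The decisive difference is the $\gamma$-dependent packing size.)
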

The requirement that the right-hand side of~\eqref{eq:infinite bit lower bound} be larger than $r \alpha^2/\min(d_1, d_2)$ is satisfied whenever
\[r
    \leq C_3 \frac{\min(1, \sigma^2) \cdot \min(d_1, d_2)}{\alpha^2}
\]
for a fixed constant $C_3$.

Following Remark~\ref{specific models}, the lower bound given in \eqref{eq:infinite bit lower bound} matches the upper bound proven in Theorem \ref{thm:recover M} for the solution to~\eqref{eq:max likelihood} up to a constant, as long as $\alpha/\sigma$ is bounded by a constant.  In other words:
\begin{quote}
\emph{When the signal-to-noise ratio is constant, almost nothing is lost by quantizing to a single bit.}
\end{quote}
Perhaps it is not particularly surprising that 1-bit quantization induces little loss of information in the regime where the noise is comparable to the underlying quantity we wish to estimate---however, what {\em is} somewhat of a surprise is that the simple convex program in~\eqref{eq:max likelihood} can successfully recover all of the information contained in these 1-bit measurements.

Before proceeding, we also briefly note that our Theorem~\ref{thm:lowerbound} is somewhat similar to Theorem 3 in~\cite{negahban2010restricted}.  The authors in~\cite{negahban2010restricted} consider slightly different sets $K$: these sets are more restrictive in the sense that it is required that $\alpha \geq \sqrt{32 \log n}$ and less restrictive because the nuclear-norm constraint may be replaced by a general Schatten-p norm constraint.  It was important for us to allow $\alpha = O(1)$ in order to compare with our upper bounds due to the exponential dependence of $\beta_\alpha$ on $\alpha$ in Theorem \ref{thm:recover M} for the probit model.  This led to some new challenges in the proof. Finally, it is also noteworthy that our statements hold for arbitrary sets $\Omega$, while the argument in~\cite{negahban2010restricted} is only valid for a random choice of $\Omega$.

\subsubsection{Recovery of the distribution from 1-bit measurements}

To conclude we address the optimality of Theorem \ref{thm:recover distribution}.  We show that under mild conditions on $f$, any algorithm that recovers the distribution $f(\M)$ must yield an estimate whose Hellinger distance deviates from the true distribution by an amount proportional to $\alpha \sqrt{ r d_1d_2 / \meas }$,  matching the upper bound of~\eqref{eq:simple distribution error bound} up to a constant.  Notice that the lower bound holds even if the algorithm is promised that $\|\M\|_\infty \leq \alpha$, which the upper bound did not require.

\begin{theorem}
\label{thm:distribution lower bound}
Fix $\alpha, r,d_1,$ and $d_2$ to be such that $r \ge 4$ and~\eqref{eq:alphareq} holds. Let $L_1$ be defined as in~\eqref{eq:lipschitz}, and suppose that $f'(x) \geq c$ and $c' \leq f(x) \leq 1 - c'$ for $x \in [-1,1]$, for some constants $c, c' > 0$. Let $\Omega$ be any subset of $[d_1]\times [d_2]$ with cardinality $\meas $, and let $\Y$ be as in \eqref{eq:observations}. Consider any algorithm which, for any $\M \in K$, takes as input $Y_{i,j}$ for $(i,j) \in \Omega$ and returns $\widehat{\M}$.  Then there exists $\M \in K$ such that with probability at least $3/4$,
\begin{equation}\label{eq:distribution lower bound}
	d_H^2(f(\M), f(\Mhat)) \geq \min\left\{ C_1, C_2 \frac{\alpha}{L_1} \sqrt{\frac{r \max\{d_1,d_2\}}{\meas } }\right\}
\end{equation}
as long as the right-hand side of~\eqref{eq:distribution lower bound} exceeds $r \alpha^2/\min(d_1, d_2)$.  Above, $C_1$ and $C_2$ are constants that depend on $c,c'$.
\end{theorem}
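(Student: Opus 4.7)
The plan is to prove the lower bound via Fano's inequality applied to a carefully chosen packing of $K$. The key subtlety needed to match the slow rate $\sqrt{r\max(d_1,d_2)/\meas}$ in the upper bound is that the packing must use matrices of rank $r' > r$ with entries on a small scale $\gamma \leq 1$, rather than rank-$r$ matrices with entries of size $\alpha$, trading rank against magnitude within the nuclear-norm budget $\alpha\sqrt{r d_1 d_2}$. The $\alpha$ factor in the final bound emerges from optimizing over this trade-off.

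\emph{Packing and distance bounds.} For $r' \in \{1,\ldots,\min(d_1,d_2)\}$ and $\gamma \in (0,1]$, a standard block construction combined with Gilbert--Varshamov produces matrices $\mat{B}^{(1)},\ldots,\mat{B}^{(N)} \in \{-1,+1\}^{d_1\times d_2}$ of rank $\leq r'$ with pairwise Hamming distance $\geq c_1 d_1 d_2$ and $N \geq \exp(c_0 r' \max(d_1,d_2))$; setting $\M^{(k)} := \gamma \mat{B}^{(k)}$ gives matrices in $K$ provided $r'\gamma^2 \leq r\alpha^2$, since $\inftynorm{\M^{(k)}} = \gamma \leq \alpha$ and $\nucnorm{\M^{(k)}} \leq \gamma\sqrt{r'd_1d_2}$. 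Because $\gamma \leq 1$, all entries lie in $[-1,1]$, where by hypothesis $f' \geq c$ and $c' \leq f \leq 1 - c'$. A direct computation using $|f(\gamma) - f(-\gamma)| \geq 2c\gamma$ shows each differing entry contributes at least $c_2\gamma^2$ to the squared Hellinger distance, giving $d_H^2(f(\M^{(i)}), f(\M^{(j)})) \geq c_3\gamma^2$. The inequality $D(\mathrm{Ber}(p)\|\mathrm{Ber}(q)) \leq (p-q)^2/(q(1-q))$, together with $\sup_{|x|\leq 1} |f'(x)| \leq L_1/4$ (from the definition of $L_1$ and $f(1-f)\leq 1/4$) and the bound $f(1-f) \geq c'(1-c')$, yields $D(\P_{\M^{(i)}} \| \P_{\M^{(j)}}) \leq C_4 \meas L_1^2 \gamma^2$.

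\emph{Fano and optimization.} Fano's inequality delivers the conclusion once $C_4 \meas L_1^2 \gamma^2 \leq \tfrac{1}{8}\log N = c_5 r'\max(d_1,d_2)$. We now maximize $\gamma^2$ subject to $r'\gamma^2 \leq r\alpha^2$, $\gamma \leq 1$, $r' \leq \min(d_1,d_2)$, and this Fano constraint. Setting the nuclear-norm and Fano constraints simultaneously tight gives
\[
    r' = \alpha L_1 \sqrt{r\meas/\max(d_1,d_2)}, \qquad \gamma^2 = (\alpha/L_1)\sqrt{r\max(d_1,d_2)/\meas},
\]
producing exactly the claimed rate. The constraint $r' \leq \min(d_1,d_2)$ reduces precisely to the hypothesis that the right-hand side of~\eqref{eq:distribution lower bound} exceeds $r\alpha^2/\min(d_1,d_2)$, and the case $\gamma^2 > 1$ (very small $\meas$) is absorbed into the constant $C_1$.

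\emph{Main obstacle.} The crucial and nonobvious step is identifying the trade-off $r' > r$. A naive packing of rank-$r$ sign matrices scaled to $\gamma = \alpha$ only yields the fast rate $r\max(d_1,d_2)/(L_1^2 \meas)$, which is weaker than the slow-rate target whenever $\alpha L_1 \sqrt{\meas/(r\max(d_1,d_2))} > 1$. Inflating $r'$ by exactly this factor enlarges $\log N$ proportionally; the nuclear-norm constraint then forces $\gamma^2$ to shrink to $r\alpha^2/r'$, but the net effect on $d_H^2 \gtrsim \gamma^2$ is the desired extra $\alpha/L_1$ multiplying $\sqrt{r\max(d_1,d_2)/\meas}$. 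A secondary technicality is that the KL bound must use only $L_1$ (not $L_\alpha$, which would be bigger in general), which is why the construction is restricted to $\gamma \leq 1$ and why the final bound features $L_1$ in the denominator.
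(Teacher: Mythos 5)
Your proposal is correct and follows essentially the same route as the paper's proof: the paper's packing (Lemma~\ref{lem:existsX}) is exactly a block sign-matrix construction with block height $B = r/\gamma^2 > r$ and entries $\pm\alpha\gamma$ of magnitude at most $1$, which saturates the nuclear-norm budget and realizes precisely the rank-versus-magnitude trade-off you describe, and the paper likewise combines the per-entry Hellinger lower bound from $f' \geq c$ on $[-1,1]$ with the KL upper bound via $L_1$ and Fano's inequality. The only differences are presentational (the paper fixes the target error $\epsilon$ first and back-solves for $\gamma$, and proves the packing by Hoeffding plus a union bound rather than citing Gilbert--Varshamov), so no further comparison is needed.
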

The requirement that the right-hand side of~\eqref{eq:distribution lower bound} be larger than $r \alpha^2/\min(d_1, d_2)$ is satisfied whenever
\[
    r \leq C_3 \frac{\min( 1, L_1^{-2}) \cdot \min(d_1, d_2)}{\alpha^2}
\]
for a constant $C_3$ that depends only on $c, c'$.  Note also that the condition that $f$ and $f'$ be well-behaved in the interval $[-1,1]$ is satisfied for the logistic model with $c=1/4$ and $c' = \frac{1}{1 + e} \leq 0.269$.  Similarly, we may take $c = 0.242$ and $c' = 0.159$ in the probit model.

\section{Simulations}
\label{sec:sims}

\subsection{Implementation}

Before presenting the proofs of our main results, we provide algorithms and a suite
of numerical experiments to demonstrate their usefulness in practice.\footnote{The code for these algorithms,
as well as for the subsequent experiments, is available
online at {\color{red}\texttt{http://users.ece.gatech.edu/{\tiny\raisebox{1pt}{${\sim}$}}mdavenport/}}.}
We present algorithms to solve the convex programs \eqref{eq:max likelihood with inf norm} and
\eqref{eq:max likelihood}, and using these we can recover $\M$ (or $f(\M)$) via 1-bit
matrix completion.

\subsubsection{Optimization algorithm}

We begin with the observation that both \eqref{eq:max likelihood with inf norm} and \eqref{eq:max
  likelihood} are instances of the more general formulation
\begin{equation}\label{eq:general form}
\minimize{x} \quad f(x)\quad \st\quad x \in \mathcal{C},
\end{equation}
where $f(x)$ is a smooth convex function from
$\mathbb{R}^d\to\mathbb{R}$, and $\mathcal{C}$ is a closed convex set
in $\mathbb{R}^d$. In particular, defining $\mathcal{V}$ to be the
bijective linear mapping that vectorizes $\mathbb{R}^{d_1\times d_2}$ to
$\mathbb{R}^{d_1d_2}$, we have $f(x) :=
-F_{\Omega,\Y}(\mathcal{V}^{-1}x)$ and, depending on whether we want
to solve \eqref{eq:max likelihood with inf norm} or \eqref{eq:max
  likelihood}, $\mathcal{C}$ equal to either
$\mathcal{V}(\mathcal{C}_1)$ or $\mathcal{V}(\mathcal{C}_2)$, where
\begin{equation}\label{eq:feasible sets}
\mathcal{C}_1 := \{ \X : \norm{\X}_* \leq \tau\}\quad\mathrm{and}\quad
\mathcal{C}_2 := \{ \X : \norm{\X}_* \leq \tau, \norm{\X}_\infty \leq \kappa\}.
\end{equation}

One possible solver for problems of the form \eqref{eq:general form}
is the nonmonotone spectral projected-gradient (SPG) algorithm
proposed by Birgin et al.~\cite{BirginMartinezRaydan2000}. Another
possibility is to use an accelerated proximal-gradient methods for the
minimization of composite functions
\cite{BeckTeboulle2009a,Nesterov2004}, which are useful for solving optimization problems of the form
\[
\minimize{x} \quad f(x) + g(x),
\]
where $f(x)$ and $g(x)$ are convex functions with $g(x)$ possibly
non-smooth. This formulation reduces to \eqref{eq:general form} when
choosing $g(x)$ to be the extended-real indicator function
corresponding to $\mathcal{C}$:
\[
g(x) = \begin{cases} 0 & x \in \mathcal{C}\\ +\infty & \text{otherwise}.\end{cases}
\]
Both algorithms are iterative and require at each iteration the
evaluation of $f(x)$, its gradient $\nabla f(x)$, and an orthogonal
projection onto $\mathcal{C}$ (i.e., the prox-function of $g(x)$)
\begin{equation}\label{eq:orthogonal projection}
\mathcal{P}_{\mathcal{C}}(v) := \argmin_{x}\quad \twonorm{x - v} \quad
\st \quad x
  \in \mathcal{C}.
\end{equation}
For our experiments we use the SPG algorithm, which we describe in
more detail below. The implementation of the algorithm is based on the
SPGL1 code \cite{Software:SPGL1,VandenBergFriedlander2008}.

\subsubsection{Spectral projected-gradient method}

In basic gradient-descent algorithms for unconstrained minimization of
a convex function $f(x)$, iterates are of the form $x_{k+1} = x_{k} -
\alpha_k\nabla f(x_{k})$, where the step length $\alpha_k \in (0,1]$
is chosen such that sufficient descent in the objective function
$f(x)$ is achieved. When the constraint $x\in\mathcal{C}$ is added,
the basic scheme can no longer guarantee feasibility of the
iterates. Projected gradient methods resolve this problem by including
on orthogonal projections back onto the feasible set
\eqref{eq:orthogonal projection} at each iteration.

The nonmonotone SPG algorithm described
in~\cite{BirginMartinezRaydan2000} modifies the basic projected
gradient method in two major ways. First, it scales the initial search
direction using the spectral step-length $\gamma_k$ as proposed by
Barzilai and Borwein \cite{BarzilaiBorwein1988}. Second, it relaxes
monotonicity of the objective values by requiring sufficient descent
relative to the maximum objective over the last $t$ iterates (or $k$
when $k < t$). Two types of line search are considered. The first type
is curvilinear and traces the following path:
\begin{equation}\label{eq:curvilinear line search}
x(\alpha) := \mathcal{P}_{\mathcal{C}}(x_k - \alpha\gamma_k \nabla f(x_k)).
\end{equation}
The second type first determines a projected gradient step, and uses
this to obtain the search direction $d_k$:
\[
d_k = \mathcal{P}_{\mathcal{C}}(x_k - \gamma_k\nabla f(x_k)) - x_k.
\]
Next, a line search is done along the linear trajectory
\begin{equation}\label{eq:linear line search}
x(\alpha) := x_k + \alpha d_k.
\end{equation}
In either case, once the step length $\alpha$ is chosen, we set
$x_{k+1} = x(\alpha)$, and proceed with the next iteration.

In the 1-bit matrix completion formulation proposed in this paper, the
projection onto $\mathcal{C}$ forms the main computational
bottleneck. As a result, it is crucial to keep the number of
projections to a minimum, and our implementation therefore relies
primarily on the line search along the linear trajectory given
by~\eqref{eq:linear line search}. The more expensive curvilinear line
search is used only when the linear one fails. We have observed that
this situation tends to arise only when $x_k$ is near optimal.



The optimality condition for \eqref{eq:general form} is that
\begin{equation}\label{eq:optimality condition}
\mathcal{P}_{\mathcal{C}}(x - \nabla f(x)) = x.
\end{equation}
Our implementation checks if \eqref{eq:optimality condition} is
approximately satisfied. In addition it imposes bounds on the total
number of iterations and the run time.

\subsubsection{Orthogonal projections onto the feasible sets}

It is well known that the orthogonal projection onto the
nuclear-norm ball $\mathcal{C}_1$ amounts to singular-value soft
thresholding \cite{CaiCandesShen2010}. In particular, let $\X = \U
\bSigma \V^*$ with $\bSigma =
\mathrm{diag}(\sigma_1,\ldots,\sigma_d)$, then
\[
\mathcal{P}_{\mathcal{C}}(\X) = \mathcal{S}_{\lambda}(\X) := \U \max\{\bSigma - \lambda \I,0\} \V^*,
\]
where the maximum is taken entrywise, and $\lambda \geq 0$ is the smallest
value for which $\sum_{i=1}^d \max\{\sigma_i - \lambda,0\} \leq \tau$.

Unfortunately, no closed form solution is known for the orthogonal
projection onto $\mathcal{C}_2$. However, the underlying problem
\begin{equation}\label{eq:Projection C2}
\mathcal{P}_{\mathcal{C}_2}(\X) := \argmin_{\Z}\quad \frac{1}{2}\norm{\X -
  \Z}_F^2\quad \st\quad \norm{\Z}_* \leq \tau,\ \norm{\Z}_{\infty} \leq
\kappa,
\end{equation}
can be solved using iterative methods. In particular, we can rewrite
\eqref{eq:Projection C2} as
\begin{equation}\label{eq:ADMM C2}
\minimize{\Z, \W}\quad \half\norm{\X - \W}_F^2\quad \st\quad
  \norm{\W}_{\infty}\leq \kappa,\ \ \norm{\Z}_* \leq \tau,\ \ \W = \Z.
\end{equation}
and apply the alternating-direction method of multipliers (ADMM)
\cite{GabayMercier1976,GlowinskiMarrocco1975}. The augmented
Lagrangian for \eqref{eq:ADMM C2} with respect to the constraint $\W =
\Z$ is given by
\[
\mathcal{L}_\mu(\Y,\W,\Z) = \half\norm{\X-\W}_F^2 -
\langle \Y, \W-\Z\rangle +
{\textstyle\frac{\mu}{2}}\norm{\W-\Z}_F^2 +
\infind{\norm{\W}_{\infty} \leq \kappa} + \infind{\norm{\Z}_* \leq \tau}
\]
The ADMM iterates the following steps to solve \eqref{eq:ADMM C2}:
\begin{description}
\item[Step 0.] Initialize $k = 0$, and select $\mu_k$, $\Y_{k}$,
  $\W_k$, $\Z_k$ such that $\norm{\W_k}_\infty \leq \kappa$ and
  $\norm{\Z_k}_* \le \tau$.
\item[Step 1.] Minimize $\mathcal{L}_{\mu}(\Y_k,W,\Z_k)$ with respect to
  $\W$, which can be rewritten as
\[
\W^{k+1} := \argmin_{\W} \norm{\W - (\X + \Y_k + \mu \Z_k) /
  (1+\mu))}_F^2\quad\st\quad\norm{\W}_\infty \leq \kappa.
\]
This is exactly the orthogonal projection of $\B = (\X + \Y_k + \mu
\Z_k) / (1+\mu)$ onto $\{\W \mid \norm{\W}_\infty \leq \kappa\}$, and
gives $\W_{k+1}(i,j) = \min\{\kappa,\max\{-\kappa,\B(i,j)\}\}$.
\item[Step 2.] Minimize $\mathcal{L}_{\mu}(\Y_k, \W_{k+1},\Z)$ with
respect to $\Z$. This gives
\[
\Z_{k+1} = \argmin_{\Z} \norm{\Z - (\W_{k+1} - 1/\mu_k\Y_{k})}_F^2\quad
\st\quad \norm{\Z}_* \leq \tau,
\]
and simplifies to $\Z_{k+1} = \mathcal{P}_{\mathcal{C}_1}(\W_{k+1} -
1/\mu_k\Y_{k})$.
\item[Step 3.] Update $\Y_{k+1} = \Y_{k} - \mu(\W_{k+1} - \Z_{k+1})$,
 set $\mu_{k+1} = 1.05\,\mu_{k}$, and increment $k$.
\item[Step 4.] Return $\Z = \Z_{k}$ when $\norm{\W_k - \Z_k}_F \leq
  \varepsilon$ and $\norm{\Z_k}_{\infty} - \kappa \leq
  \varepsilon$ for some sufficiently small $\varepsilon >
  0$. Otherwise, repeat steps 1--4.
\end{description}


\subsection{Synthetic experiments}

To evaluate the performance of this algorithm in practice and to
confirm the theoretical results described above, we first performed a
number of synthetic experiments.  In particular, we constructed a
random $d\times d$ matrix $\M$ with rank $r$ by forming $\M = \M_1
\M_2^*$ where $\M_1$ and $\M_2$ are $d \times r$ matrices with entries
drawn i.i.d.\ from a uniform distribution on $[-\frac12,\frac12]$.
The matrix is then scaled so that $\|\M\|_\infty = 1$.  We then
obtained 1-bit observations by adding Gaussian noise of variance
$\sigma^2$ and recording the sign of the resulting value.

We begin by comparing the performance of the algorithms
in~\eqref{eq:max likelihood with inf norm} and~\eqref{eq:max
  likelihood} over a range of different values of $\sigma$.  In this
experiment we set $d = 500$, $r=1$, and $\meas  = 0.15 d^2$, and we
measured performance of each approach using the squared Frobenius norm
of the error (normalized by the norm of the original matrix $\M$) and
averaged the results over 15 draws of $\M$.\footnote{In evaluating
  these algorithms we found that it was beneficial in practice to
  follow the recovery by a ``debiasing'' step where the recovered
  matrix $\Mhat$ is forced to be rank $r$ by computing the SVD of
  $\Mhat$ and hard thresholding the singular values.  In cases where
  we report the Frobenius norm of the error, we performed this
  debiasing step, although it does not dramatically impact the
  performance.} The results are shown in Figure~\ref{fig:sigmaExp}.
We observe that for both approaches, the performance is poor when
there is too little noise (when $\sigma $ is small) and when there is
too much noise (when $\sigma$ is large). These two regimes correspond
to the cases where the noise is either so small that the observations
are essentially noise-free or when the noise is so large that each
observation is essentially a coin toss.  In the regime where the noise
is of moderate power, we observe better performance for both
approaches.  Perhaps somewhat surprisingly, we find that for much of
this range, the approach in~\eqref{eq:max likelihood} appears to
perform almost as well as~\eqref{eq:max likelihood with inf norm},
even though we do not have any theoretical guarantees
for~\eqref{eq:max likelihood}.  This suggests that adding the
infinity-norm constraint as in~\eqref{eq:max likelihood with inf norm}
may have only limited practical benefit, despite the key role this
constraint played in our analysis. By using the simpler program
in~\eqref{eq:max likelihood} one can greatly simplify the projection
step in the algorithm, so in practice this approach may be preferable.

\begin{figure}
\centering
\includegraphics{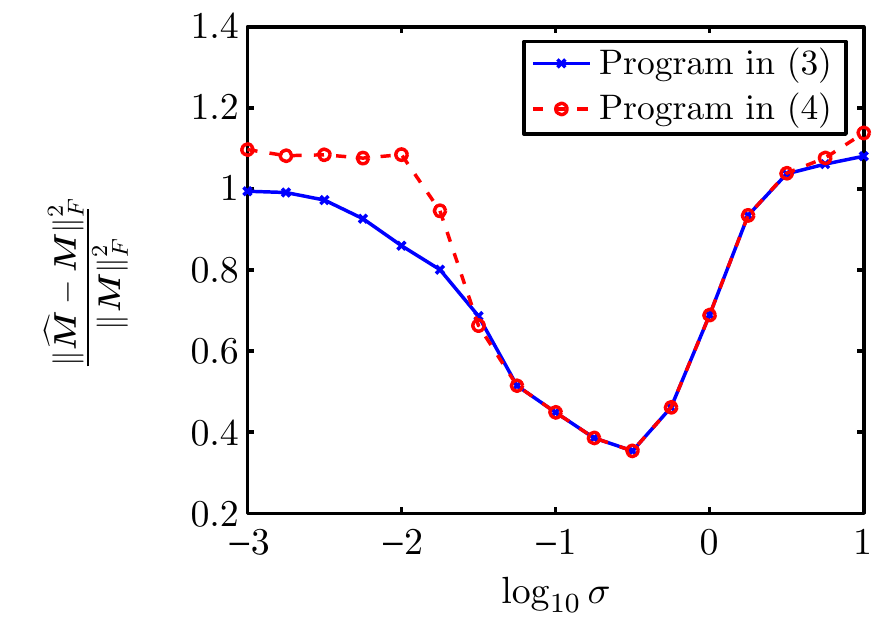}
\caption{\small \sl Average relative error in the results obtained using algorithms~\eqref{eq:max likelihood with inf norm} and~\eqref{eq:max likelihood} over a range of different values of $\sigma$.  We observe that for both approaches, the performance is poor when there is too little noise (when $\sigma $ is small) and when there is too much noise (when $\sigma$ is large). In the case of moderate noise, we observe that~\eqref{eq:max likelihood} appears to perform almost as well as~\eqref{eq:max likelihood with inf norm}, even though we do not have any theoretical guarantees for~\eqref{eq:max likelihood}. \label{fig:sigmaExp}}
\end{figure}

We also conducted experiments evaluating the performance of
both~\eqref{eq:max likelihood with inf norm} and~\eqref{eq:max
  likelihood} as a function of $\meas $ for a particular choice of
$\sigma$.  The results showing the impact of $\meas $ on~\eqref{eq:max
  likelihood} are shown in Figure~\ref{fig:mExp} (the results
for~\eqref{eq:max likelihood with inf norm} at this noise level are
almost indistinguishable).  In this experiment we set $d = 200$, and
chose $\sigma \approx 0.18$ such that $\log_{10}(\sigma) = 0.75$,
which lies in the regime where the noise is neither negligible nor
overwhelming. We considered matrices with rank $r = 3, 5, 10$ and
evaluated the performance over a range of
$\meas $. Figure~\ref{fig:mExp}(a) shows the performance in terms of the
relative Frobenius norm of the error, and Figure~\ref{fig:mExp}(b)
shows the performance in terms of the Hellinger distance between the
recovered distributions.  Consistent with our theoretical results, we
observe a decay in the error (under both performance metrics) that
appears to behave roughly on the order of $\meas ^{-1/2}$.

\begin{figure}
\centering
\begin{tabular}{c c}
\hspace{-.2in} \includegraphics{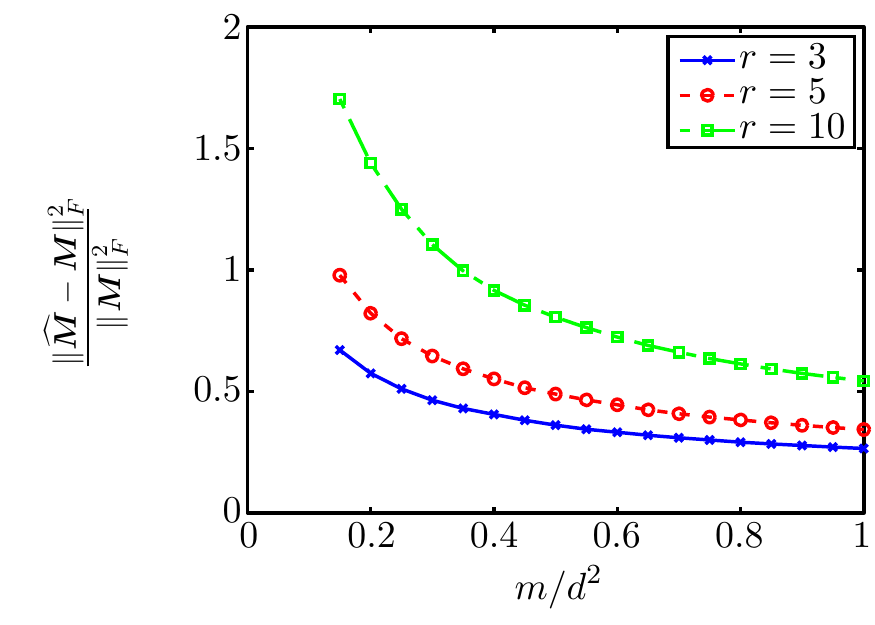} & \hspace{-.2in} \includegraphics{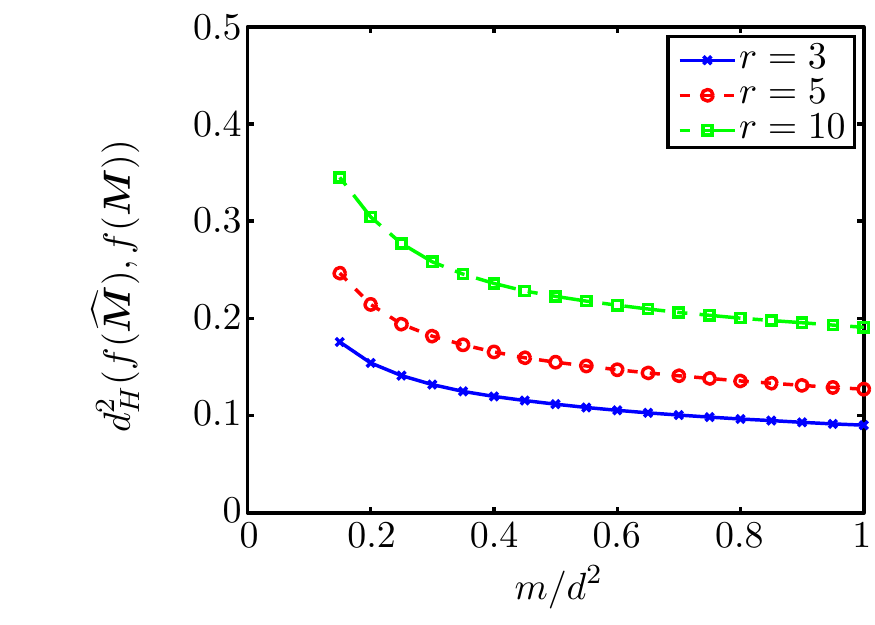} \\
{\small \sl (a)} & {\small \sl (b)}
\end{tabular}
\caption{\small \sl The impact of $\meas $ on~\eqref{eq:max likelihood}. (a) shows the performance in terms of the relative Frobenius norm of the error. (b) shows the performance in terms of the Hellinger distance between the recovered distributions.  In both cases, we observe a decay in the error (under both performance metrics) that appears to behave roughly on the order of $\meas ^{-1/2}$.   \label{fig:mExp}}
\end{figure}

\subsection{Collaborative filtering}

To evaluate the performance of our algorithm in a practical setting,
we consider the MovieLens (100k) data set, which is available for
download at~\url{http://www.grouplens.org/node/73}.  This data set
consists of 100,000 movie ratings from 1000 users on 1700 movies, with
each rating occurring on a scale from 1 to 5.  For testing purposes,
we converted these ratings to binary observations by comparing each
rating to the average rating for the entire dataset (which is
approximately 3.5).  We then apply the algorithm in~\eqref{eq:max
  likelihood} (using the logistic model of $f(x) = \frac{e^x}{1+e^x}$)
on a subset of 95,000 ratings to recover an estimate of $\M$.
However, since our only source of data is the quantized ratings, there
is no ``ground truth'' against which to measure the accuracy of our
recovery.  Thus, we instead evaluate our performance by checking to
see if the estimate of $\M$ accurately predicts the sign of the
remaining 5000 unobserved ratings in our dataset.  The result of this
simulation is shown in the first line of Table~\ref{fig:movielens}, which gives the accuracy in predicting
whether the unobserved ratings are above or below the average rating
of 3.5.

By comparison, the second line in the table shows the results obtained
using a ``standard'' method that uses the raw ratings (on a scale from
1 to 5) and tries to minimize the nuclear norm of the recovered matrix
subject to a constraint that requires the Frobenius norm of the
difference between the recovered and observed entries to be
sufficiently small.  We implement this traditional approach using the
TFOCS software package~\cite{tfocs}, and evaluate the results using
the same error criterion as the 1-bit matrix completion
approach---namely, we compare the recovered ratings to the average
(recovered) rating.  This approach depends on a number of input
parameters: $\alpha$ in~\eqref{eq:max likelihood with inf norm}, the
constraint on the Frobenius norm in the traditional case, as well as
the internal parameter $\mu$ in TFOCS. We determine the parameter
values by simply performing a grid search and selecting those
values that lead to the best performance.

Perhaps somewhat surprisingly, the 1-bit approach performs significantly better than the traditional one, {\em even though the traditional approach is given more information} in the form of the raw ratings, instead of the binary observations.\footnote{While not reported in the table, we also observed that the 1-bit approach is relatively insensitive to the choice of $\alpha$, so that this improvement in performance does not rely on a careful parameter setting.} The intuition as to how this might be possible is that the standard approach is likely paying a significant penalty for actually requiring that the recovered matrix yields numerical ratings close to ``1'' or ``5'' when a user's true preference could extend beyond this scale.

\begin{table}
\centering
\begin{tabular}{|r||c | c | c | c | c || c|}
\hline
Original rating & 1 & 2 & 3 & 4 & 5 & Overall \\ \hline \hline
1-bit matrix completion & 79\% & 73\% & 58\% & 75\% & 89\% & 73\% \\ \hline
``Standard'' matrix completion & 64\% & 56\% & 44\% & 65\% & 75\% & 60\% \\ \hline
\end{tabular}
\caption{\small \sl Results of a comparison between a 1-bit matrix completion approach and ``standard'' matrix completion on the MovieLens 100k dataset, as evaluated by predicting whether the unobserved ratings were above or below average.  Overall, the 1-bit approach achieves an accuracy of 73\%, which is significantly better than the 60\% accuracy achieved by standard methods.  The 1-bit approach also outperforms the standard approach for each possible individual rating, although we see that both methods perform poorly when the true rating is very close to the average rating. \label{fig:movielens}}
\end{table}

\section{Discussion}
\label{sec:disc}

Many of the applications of matrix completion consider discrete observations, often in the form of binary measurements. However, matrix completion from  noiseless binary measurements is extremely ill-posed, even if one collects a binary measurement for each of the matrix entries. Fortunately, when there are some  stochastic variations (noise) in the observations, recovery becomes well-posed. In this paper we have shown  that the unknown matrix can be accurately and efficiently recovered from binary measurements in this setting.  When the infinity norm of the unknown matrix is bounded by a constant, our error bounds are tight to within a constant and even match what is possible for undiscretized data.  We have also shown that the binary probability distribution can be reconstructed over the entire matrix without any assumption on the infinity-norm, and we have provided a matching lower bound (up to a constant).

Our theory considers approximately low-rank matrices---in particular, we assume that the singular values belong to a scaled Schatten-1 ball.  It would be interesting to see whether more accurate reconstruction could be achieved under the assumption that the unknown matrix has precisely $r$ nonzero singular values.  We conjecture that the Lagrangian formulation of the problem could be fruitful for this endeavor.  It would also be interesting to study whether our ideas can be extended to deal with measurements that are quantized to more than 2 (but still a small number) of different values, but we leave such investigations for future work.

\section*{Acknowledgements}
We would like to thank Roman Vershynin for helpful discussions and Wenxin Zhou for pointing out an error in an earlier version of the proof of Theorem \ref{thm:recover distribution with constraints}.

\appendix
\section{Proofs of the main results}
\label{sec:proofs}

We now provide the proofs of the main theorems presented in Section~\ref{sec:main}.  To begin, we first define some additional notation that we will need for the proofs. For two probability distributions $\mathcal{P}$ and $\mathcal{Q}$ on a finite set $A$, $D(\mathcal{P}\|\mathcal{Q})$ will denote the Kullback-Leibler (KL) divergence,
\[
    D(\mathcal{P}\| \mathcal{Q}) = \sum_{x \in A} \mathcal{P}(x) \log\left( \frac{ \mathcal{P}(x) }{\mathcal{Q}(x)} \right),
\]
where $\mathcal{P}(x)$ denotes the probability of the outcome $x$ under the distribution $\mathcal{P}$.  We will abuse this notation slightly by overloading it in two ways.  First, for scalar inputs $p,q \in [0,1]$, we will set
\[
    D(p\|q) = p \log\left( \frac{p}{q} \right) + (1-p) \log\left( \frac{1-p}{1-q} \right).
\]
Second, for two matrices $\mP, \mQ \in [0,1]^{d_1 \times d_2}$, we define
\[
    D(\mP \| \mQ) = \frac{1}{d_1 d_2} \sum_{i,j} D(P_{i,j} \| Q_{i,j}).
\]
We first prove Theorem \ref{thm:recover distribution}.  Theorem \ref{thm:recover M} will then follow from an approximation argument.  Finally, our lower bounds will be proved in Section \ref{sec:lowerbounds} using information theoretic arguments.

\subsection{Proof of Theorem~\ref{thm:recover distribution}}
\label{ssec:recover_distribution}

We will actually prove a slightly more general statement, which will be helpful in the proof of Theorem \ref{thm:recover M}.
We will assume that $\inftynorm{\M} \leq \gamma$, and we will modify the program~\eqref{eq:max likelihood} to enforce $\inftynorm{\X} \leq \gamma$.  That is, we will consider the program
\begin{equation}
\label{eq:special max likelihood}
    \Mhat = \argmax_{\X}  \loglike_{\Omega, \Y}(\X)  \qquad \mathrm{subject~to} \qquad \nucnorm{\X} \leq \alpha \sqrt{r d_1 d_2} \quad \mathrm{and} \quad \inftynorm{\X} \leq \gamma.
\end{equation}
We will then send $\gamma \to \infty$ to recover the statement of Theorem~\ref{thm:recover distribution}. Formally, we prove the following theorem.

\begin{theorem}
\label{thm:recover distribution with constraints}
Assume that $\nucnorm{\M} \leq \alpha \sqrt{r d_1 d_2}$ and $\inftynorm{\M} \leq \gamma$.  Suppose that $\Omega$ is chosen at random following the binomial model of Section \ref{sec:model} and satisfying $\E|\Omega| = \meas $.  Suppose that $\Y$ is generated as in~\eqref{eq:observations}, and let $L_\gamma$ be as in~\eqref{eq:lipschitz}. Let $\Mhat$ be the solution to~\eqref{eq:special max likelihood}.  Then with probability at least $1 - C_1/(d_1+d_2)$,
\begin{equation}
\label{eq:distribution error bound 2}
    d_H^2(f(\Mhat), f(\M) ) \leq C_2 L_\gamma \alpha \sqrt{\frac{r(d_1 + d_2)}{\meas }}\sqrt{1 + \frac{(d_1 + d_2) \log(d_1 d_2)}{\meas }}.
\end{equation}
Above, $C_1$ and $C_2$ are absolute constants.
\end{theorem}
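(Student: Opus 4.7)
The plan is to follow the classical M-estimator recipe: exploit optimality of $\Mhat$ and the observation model to reduce the problem to bounding an empirical process, and then control that process by symmetrization, the Ledoux--Talagrand contraction inequality, and an operator-norm bound for a sparse Rademacher matrix.

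First I would set $\mathcal{C} := \{\X : \nucnorm{\X} \leq \alpha\sqrt{rd_1d_2},\ \inftynorm{\X} \leq \gamma\}$ and invoke optimality: $\loglike_{\Omega,\Y}(\Mhat) \geq \loglike_{\Omega,\Y}(\M)$ since $\M \in \mathcal{C}$. A direct calculation under the binomial sampling of $\Omega$ and the observation model~\eqref{eq:observations} shows
\begin{equation*}
\E\bigl[\loglike_{\Omega,\Y}(\M)-\loglike_{\Omega,\Y}(\X)\bigr] \;=\; \frac{\meas}{d_1 d_2}\sum_{i,j} D\bigl(f(M_{i,j})\,\big\|\,f(X_{i,j})\bigr),
\end{equation*}
so combining with optimality yields
\begin{equation*}
\frac{\meas}{d_1 d_2}\sum_{i,j} D\bigl(f(M_{i,j})\,\big\|\,f(\Mhatel_{i,j})\bigr) \;\leq\; \sup_{\X\in\mathcal{C}}\Bigl|\bigl[\loglike_{\Omega,\Y}(\X)-\loglike_{\Omega,\Y}(\M)\bigr] - \E\bigl[\loglike_{\Omega,\Y}(\X)-\loglike_{\Omega,\Y}(\M)\bigr]\Bigr|.
\end{equation*}
The standard entrywise inequality $d_H^2(p,q)\leq D(p\|q)$ for Bernoulli distributions then reduces the whole problem to a uniform-deviation bound on this supremum.

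Next I would treat the supremum by symmetrization, introducing i.i.d.\ Rademacher variables $\epsilon_{i,j}$, followed by the Ledoux--Talagrand contraction inequality. The key analytic input is that, for $|x|\leq\gamma$, both $x\mapsto\log f(x)$ and $x\mapsto\log(1-f(x))$ are $L_\gamma$-Lipschitz, directly from definition~\eqref{eq:lipschitz} of $L_\gamma$ (since $|f'/f|, |f'/(1-f)| \leq L_\gamma$). After centering the per-entry summands at $\M$---a deterministic subtraction that does not affect the Rademacher-averaged supremum but enforces the ``vanishing at zero'' hypothesis of contraction---the problem reduces to bounding
\begin{equation*}
\E \sup_{\X\in\mathcal{C}}\Bigl|\sum_{i,j}\epsilon_{i,j}\delta_{i,j}(X_{i,j}-M_{i,j})\Bigr| \;\leq\; 2\alpha\sqrt{rd_1d_2}\cdot\E\opnorm{\mE},
\end{equation*}
where $\delta_{i,j}=\ind{(i,j)\in\Omega}$, $\mE_{i,j}=\epsilon_{i,j}\delta_{i,j}$, and the inequality comes from duality between the nuclear and operator norms.

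Finally I would invoke a matrix concentration inequality (matrix Bernstein or a Bandeira--van Handel type estimate) for the sparse Rademacher matrix $\mE$, whose entries are $\pm 1$ each with probability $\meas/(2d_1d_2)$ and $0$ otherwise. This yields a bound of the form
\begin{equation*}
\E\opnorm{\mE} \;\leq\; C\bigl(\sqrt{\meas(d_1+d_2)/(d_1 d_2)}+\sqrt{\log(d_1 d_2)}\bigr),
\end{equation*}
and the two terms, after multiplying by $2\alpha\sqrt{rd_1d_2}$ and dividing by $\meas$, produce exactly the two summands inside $\sqrt{1+(d_1+d_2)\log(d_1d_2)/\meas}$ in~\eqref{eq:distribution error bound 2}. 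To promote this expected estimate to a high-probability statement with failure probability at most $C_1/(d_1+d_2)$, I would apply Talagrand's (Bousquet's) concentration inequality for suprema of empirical processes; the uniform bound and variance of the centered summands are controlled by quantities depending on $\gamma$, $L_\gamma$, and $\alpha$, which are absorbed into $C_2$.

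The main obstacle is the last step: sharpening the operator-norm estimate for $\mE$ and aligning Talagrand's concentration so that the dimension-dependent factors reproduce the exact form of~\eqref{eq:distribution error bound 2} in both the ``dense'' regime $\meas \gtrsim (d_1+d_2)\log(d_1d_2)$ and its complement, while also tracking the various constants from $L_\gamma$, $\alpha$, and $\gamma$. The technical subtlety in the contraction step (restoring ``vanishing at zero'' by centering at $\M$) is easily handled since the subtracted term is deterministic and hence invisible to the Rademacher process.
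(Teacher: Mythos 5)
Your overall architecture---optimality of $\Mhat$ plus the identity $\E[\loglike_{\Omega,\Y}(\M)-\loglike_{\Omega,\Y}(\X)]=\meas D(f(\M)\|f(\X))$, the bound $d_H^2\le D$, then symmetrization, Ledoux--Talagrand contraction with Lipschitz constant $L_\gamma$, nuclear/operator duality, and an operator-norm estimate for the sparse Rademacher matrix---is essentially the paper's proof, and all of those steps are sound (the paper centers at $\mathbf{0}$ rather than at $\M$, which is only a cosmetic difference; and note that matrix Bernstein alone would cost an extra $\sqrt{\log}$ on the leading term of $\E\opnorm{\mE}$, so you do need a Seginer or Bandeira--van Handel type bound, as you indicate).

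The genuine gap is your final step: promoting the expectation bound to a high-probability bound via Talagrand/Bousquet. That inequality requires a uniform bound $b$ on the individual (centered, contracted) summands, and here the best available bound is $b\asymp L_\gamma\gamma$, since $|\log(f(x)/f(y))|\le L_\gamma|x-y|$ and $|x-y|$ can be as large as $2\gamma$ on the feasible set. The resulting Bernstein-type deviation term $b\,u$ at confidence level $e^{-u}$ with $u=\log(d_1+d_2)$ contributes $L_\gamma\gamma\log(d_1+d_2)/\meas$ to the final error, which depends on $\gamma$ \emph{beyond} its appearance in $L_\gamma$. The theorem you are proving asserts a bound in which $\gamma$ enters only through $L_\gamma$ with absolute constants, and---crucially---Theorem~\ref{thm:recover distribution} is obtained from this statement by sending $\gamma\to\infty$ (with $L_\gamma\to L$ finite, e.g.\ for the logistic model), so any residual $\gamma$-dependence destroys the argument. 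This is exactly the obstruction the paper flags in Remark~\ref{remark:generror}: bounded-difference-style concentration fails because the differences are not well bounded. The paper's workaround is to run the entire chain (symmetrization, contraction, duality, Seginer's theorem) at the level of the $h$-th moment of the supremum with $h=\log(d_1+d_2)$, and then apply Markov's inequality; this yields the $1-C_1/(d_1+d_2)$ probability without ever invoking a uniform bound on the summands. You would need to replace your Talagrand step with such a moment argument (or an equivalent device) for the proof to close.
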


For the proof of Theorem \ref{thm:recover distribution with constraints}, it will be convenient to work with the function
\begin{equation*}
\bar{\loglike}_{\Omega, \Y}(\X) = \loglike_{\Omega, \Y}(\X) - \loglike_{\Omega, \Y}(\mathbf{0})
\end{equation*}
rather than with $\loglike_{\Omega, \Y}$ itself.
The key will be to establish the following concentration inequality.
\begin{lemma}
\label{lem:concentration}
Let $G \subset \R^{d_1 \times d_2}$ be
\[
    G = \left\{ \X \in \R^{d_1 \times d_2} \ : \ \nucnorm{\X} \leq \alpha \sqrt{r d_1d_2} \right\}
\]
for some $r \leq \min \{d_1,d_2\}$ and $\alpha \geq 0$.  Then
\begin{equation}
\label{eq:supProbBound}
    \Pr{\sup_{\X \in G}  | \bar{\loglike}_{\Omega, \Y}(\X) - \E{ \bar{\loglike}_{\Omega, \Y}(\X) }| \geq C_0 \alpha L_\gamma \sqrt{r}\sqrt{\meas (d_1 + d_2) + d_1 d_2 \log(d_1 d_2)}} \le \frac{C_1}{d_1+d_2},
\end{equation}
where $C_0$ and $C_1$ are absolute constants and the probability and the expectation are both over the choice of $\Omega$ and the draw of $Y$.
\end{lemma}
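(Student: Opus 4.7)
The strategy is the standard empirical-process template for uniform deviation bounds: symmetrize, contract to a linear functional, dualize to an operator norm, invoke matrix concentration, and finish with a Talagrand-type concentration step. I would break the argument into four phases.

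\textbf{Phase 1 (rewrite and symmetrize).} Setting $\delta_{ij} := \ind{(i,j) \in \Omega}$ and $h_{ij}(x) := \ind{Y_{ij} = 1}\log(f(x)/f(0)) + \ind{Y_{ij} = -1}\log((1-f(x))/(1-f(0)))$, one has $\bar{\loglike}_{\Omega,\Y}(\X) = \sum_{i,j} \delta_{ij}\,h_{ij}(X_{ij})$ with $h_{ij}(0) = 0$ and $|h_{ij}'(x)| \le L_\gamma$ for $|x|\le\gamma$ by the definition \eqref{eq:lipschitz} of $L_\gamma$. Because the pairs $(\delta_{ij}, Y_{ij})$ are independent across $(i,j)$, the Gin\'e--Zinn symmetrization inequality gives $\E Z \le 2\,\E\sup_{\X \in G} \bigl|\sum_{i,j} \epsilon_{ij}\delta_{ij}\,h_{ij}(X_{ij})\bigr|$, where $Z := \sup_{\X\in G} |\bar{\loglike}_{\Omega,\Y}(\X) - \E\bar{\loglike}_{\Omega,\Y}(\X)|$ and $\epsilon_{ij}$ are i.i.d.\ Rademacher signs independent of everything else.

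\textbf{Phase 2 (contraction).} Since each $h_{ij}$ vanishes at $0$ and is $L_\gamma$-Lipschitz on $[-\gamma,\gamma]$, Talagrand's contraction principle applied coordinatewise yields $\E Z \le 4L_\gamma\,\E \sup_{\X \in G} \bigl|\sum_{i,j} \epsilon_{ij}\delta_{ij}\,X_{ij}\bigr|$. A subtle point arises here: the Lipschitz bound is local to $[-\gamma,\gamma]$, whereas $G$ only controls $\nucnorm{\X}$ and allows entries exceeding $\gamma$ in magnitude. I would either pre-compose the $h_{ij}$ with truncation to $[-\gamma,\gamma]$ (which coincides with $h_{ij}$ on the infinity-norm-constrained feasible set actually used in the applications) or handle the residual region by a peeling argument.

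\textbf{Phase 3 (duality and matrix concentration).} Writing the Rademacher sum as $\langle \X,\W\rangle$ with $W_{ij} := \epsilon_{ij}\delta_{ij}$, nuclear/operator-norm duality gives $\sup_{\X\in G}|\langle \X,\W\rangle| = \alpha\sqrt{r d_1 d_2}\,\opnorm{\W}$. The entries of $\W$ are i.i.d.\ on $\{-1,0,+1\}$ with $\E W_{ij}^2 = \meas/(d_1 d_2)$, so matrix Bernstein (or non-commutative Khintchine) yields $\E\opnorm{\W} \lesssim \sqrt{\meas(d_1+d_2)/(d_1 d_2)} + \sqrt{\log(d_1+d_2)}$. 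Combining Phases 1--3 gives $\E Z \lesssim L_\gamma\alpha\sqrt{r}\,\sqrt{\meas(d_1+d_2) + d_1 d_2\log(d_1 d_2)}$, matching the target bound in expectation.

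\textbf{Phase 4 (concentration around the mean).} To upgrade from $\E Z$ to a high-probability statement, I would apply Bousquet's version of Talagrand's concentration inequality for the supremum of a bounded empirical process. Each summand $\delta_{ij}h_{ij}(X_{ij})$ is uniformly bounded on the truncated class and its variance is controlled by the same quantity driving $\E Z$, so the deviation $Z - \E Z$ is of the same order with failure probability at most $C_1/(d_1+d_2)$. The hardest step is Phase 2: reconciling the purely global (nuclear-norm) structure of $G$ with the purely local $L_\gamma$-Lipschitz bound of the per-entry log-likelihood is where the argument requires the most care. The remaining ingredients -- symmetrization, nuclear/operator duality, matrix Bernstein, and Talagrand's inequality -- are essentially textbook at this point.
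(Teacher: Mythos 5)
Your Phases 1--3 (symmetrization, contraction of the per-entry log-likelihood to a linear functional, nuclear/operator duality, and a bound on $\E\opnorm{\mE\had\Delta_\Omega}$) are exactly the skeleton of the paper's argument, and the subtlety you flag in Phase 2 is real: the paper resolves it by noting that the lemma is only ever applied on the feasible set of~\eqref{eq:special max likelihood}, where $\inftynorm{\X}\le\gamma$ (with $\gamma\to\infty$ and $L=\lim L_\gamma$ for Theorem~\ref{thm:recover distribution}). The genuine gap is Phase 4. Bousquet/Talagrand concentration for empirical processes requires a uniform bound $b$ on the individual summands over the \emph{whole} class, and on $G$ (a nuclear-norm ball with no infinity-norm constraint) the best available bound is $|h_{ij}(X_{ij})|\le L_\gamma\inftynorm{\X}\le L_\gamma\alpha\sqrt{rd_1d_2}$. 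The resulting deviation term $b\cdot t$ with $t\asymp\log(d_1+d_2)$ is $L_\gamma\alpha\sqrt{rd_1d_2}\,\log(d_1+d_2)$, which exceeds the claimed $L_\gamma\alpha\sqrt{r}\sqrt{d_1d_2\log(d_1d_2)}$ by a $\sqrt{\log}$ factor; your assertion that the summands are ``uniformly bounded on the truncated class'' smuggles in an infinity-norm truncation that $G$ does not impose and that must be absent for the lemma to yield Theorem~\ref{thm:recover distribution} in the limit $\gamma\to\infty$. This is precisely the obstruction the authors call out in Remark~\ref{remark:generror}: bounded-differences-style concentration fails here because a single coordinate change can move the supremum by $2\max_{\X\in G}\inftynorm{\X}=2\alpha\sqrt{rd_1d_2}$. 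The paper's workaround is to never separate ``expectation'' from ``concentration'': it bounds the $h$-th moment $\E\sup_{\X\in G}|\bar{\loglike}_{\Omega,\Y}(\X)-\E\bar{\loglike}_{\Omega,\Y}(\X)|^h$ directly for $h=\log(d_1+d_2)$, carrying symmetrization and contraction at the level of $h$-th moments, and then applies Markov's inequality; the probability $C_1/(d_1+d_2)$ comes from $(c/C_0)^{\log(d_1+d_2)}$.

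A secondary, smaller issue is in Phase 3: vanilla matrix Bernstein gives $\E\opnorm{\mE\had\Delta_\Omega}\lesssim\sqrt{\meas(d_1+d_2)\log(d_1+d_2)/(d_1d_2)}+\log(d_1+d_2)$, with a logarithm on \emph{both} terms, which again degrades the final bound by $\sqrt{\log}$. The bound you quote (no log on the leading term) is correct but requires Seginer's theorem (as the paper uses, in $h$-th-moment form, reducing $\opnorm{\cdot}$ to maxima of row and column sums controlled by Bernstein's inequality and a comparison with exponential random variables) or a Bandeira--van Handel type refinement, not matrix Bernstein. Neither of these issues affects the high-level architecture, but as written the proposal proves a statement that is weaker than~\eqref{eq:supProbBound} by logarithmic factors, and in the unbounded case ($\gamma\to\infty$) Phase 4 does not go through at all without the moment-method substitute.
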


We will prove this lemma below, but first we show how it implies Theorem~\ref{thm:recover distribution with constraints}. To begin, notice that for any choice of $\X$,
\begin{align*}
\Exp{ \bar{\loglike}_{\Omega, \Y}(\X) - \bar{\loglike}_{\Omega, \Y}(\M)} &=  \Exp{ \loglike_{\Omega, \Y}(\X) - \loglike_{\Omega, \Y}(\M) }\\
&=\frac{\meas }{d_1d_2} \sum_{i,j} \left( f(M_{i,j}) \log\left( \frac{ f(X_{i,j})}{f(M_{i,j})}\right) + (1 - f(M_{i,j})) \log\left( \frac{1 - f(X_{i,j})}{1 - f(M_{i,j})}\right)\right) \\
 & =  -\meas D(f(\M) \| f(\X) ),
\end{align*}
where the expectation is over both $\Omega$ and $\Y$. Next, note that by assumption $\M \in G$.
Then, we have for any $\X \in G$
\begin{align*}
\bar{\loglike}_{\Omega, \Y}(\X) - \bar{\loglike}_{\Omega,\Y}(\M) &=
\Exp{ \bar{\loglike}_{\Omega, \Y}(\X) - \bar{\loglike}_{\Omega,\Y}(\M) } + \left( \bar{\loglike}_{\Omega,\Y}(\X) - \Exp{ \bar{\loglike}_{\Omega,\Y}(\X) } \right) - \left( \bar{\loglike}_{\Omega,\Y}(\M) - \Exp{ \bar{\loglike}_{\Omega,\Y}(\M) }\right) \\
&\leq \Exp{ \bar{\loglike}_{\Omega, \Y}(\X) - \bar{\loglike}_{\Omega,\Y}(\M) } + 2\sup_{\X \in G} \left| \bar{\loglike}_{\Omega,\Y}(\X) - \Exp{ \bar{\loglike}_{\Omega,\Y}(\X) } \right| \\
&= -\meas D(f(\M) \| f(\X) ) + 2\sup_{\X \in G} \left| \bar{\loglike}_{\Omega,\Y}(\X) - \Exp{ \bar{\loglike}_{\Omega,\Y}(\X) } \right|.
\end{align*}
Moreover, from the definition of $\Mhat$ we also have that $\Mhat \in G$ and $\loglike_{\Omega, \Y}( \Mhat) \ge \loglike_{\Omega, \Y}( \M)$.  Thus
\begin{align*}
0 &\leq -\meas D(f(\M) \| f(\Mhat) ) + 2\sup_{\X \in G} \left| \bar{\loglike}_{\Omega,\Y}(\X) - \Exp{ \bar{\loglike}_{\Omega,\Y}(\X) } \right|.
\end{align*}
Applying Lemma~\ref{lem:concentration}, we obtain that with probability at least $1-C_1/(d_1+d_2)$,  we have
\[
    0 \leq -\meas D( f(\M) \| f(\Mhat)) + 2C_0 \alpha L_\gamma \sqrt{r}\sqrt{\meas (d_1 + d_2) + d_1 d_2 \log(d_1 d_2)}
\]
In this case, by rearranging and applying the fact that $\sqrt{d_1 d_2} \le d_1 + d_2$, we obtain
\begin{equation}
\label{eq:KLbound}
    D( f(\M) \| f(\Mhat)) \leq 2 C_0 \alpha L_\gamma \sqrt{\frac{r(d_1 + d_2)}{\meas }}\sqrt{1 + \frac{(d_1 + d_2) \log(d_1 d_2)}{\meas }}
\end{equation}
Finally, we note that the KL divergence can easily be bounded below by the Hellinger distance:
\[
    d_H^2(p,q) \leq D(p\|q).
\]
This is a simple consequence of Jensen's inequality combined with the fact that $1-x \le - \log x$. Thus, from~\eqref{eq:KLbound} we obtain
\[
    d^2_H( f(\M) , f(\Mhat) ) \leq 2 C_0 \alpha L_\gamma \sqrt{\frac{r(d_1 + d_2)}{\meas }}\sqrt{1 + \frac{(d_1 + d_2) \log(d_1 d_2)}{\meas }},
\]
which establishes Theorem~\ref{thm:recover distribution with constraints}.  Theorem \ref{thm:recover distribution} then follows by taking the limit as $\gamma \to \infty$.

\begin{remark}[Relationship to existing theory]
\label{remark:generror}
The crux of the proof of the theorem 
involves bounding the quantity
\begin{equation}\label{eq:classerror}
\bar{\loglike}_{\Omega, \Y}( \Mhat) - \bar{\loglike}_{\Omega, \Y}(\M)
\end{equation}
via Lemma \ref{lem:concentration}.
In the language of statistical learning theory~\cite{boucheron2005theory, kar2013generalization}, \eqref{eq:classerror} is a generalization error, with respect to a \em loss function \em corresponding to $\bar{\loglike}$. In that literature, there are standard approaches to proving bounds on quantities of this type. We use a few of these tools (namely, symmetrization and contraction principles) in the proof of Lemma~\ref{lem:concentration}.  However, our proof also deviates from these approaches in a few ways.  Most notably, we use a moment argument to establish concentration, while in the classification literature it is common to use approaches based on the method of bounded differences.  In our problem setup, using bounded differences fails to yield a satisfactory answer because in the most general case we do not impose an $\ell_\infty$-norm constraint on $\M$.  This in turn implies that our differences are not well-bounded. 
Specifically, 
consider the empirical process whose moment we bound in~\eqref{eq:supbound simple}, i.e.,
\[\sup_{\X \in G} \ip{ \mE \had \Delta_{\Omega} } {\X} .\]
The supremum is a function of the independent random variables $E_{i,j} \cdot \Delta_{i,j}$.  To check the effectiveness of the method of bounded differences, suppose $E_{i,j} \Delta_{i,j} = 1$ (for some $i,j$), and is replaced by $-1$.  Then the empirical process can change by as much as  $2 \max_{X \in G} \inftynorm{X} = \alpha\sqrt{r d_1 d_2}$, which is too large to yield an effective answer.

The fact that $\bar{\loglike}$ is not well-bounded in this way is also why the generalization error bounds of~\cite{srebro2004generalization, srebro2004maximum} do not immediately generalize to provide results analogous to Theorem~\ref{thm:recover distribution}.  To obtain this result, we must choose a loss function such that~\eqref{eq:classerror} reduces to the KL divergence, and unfortunately, this loss function is not well-behaved.

Finally, we also note that symmetrization and contraction principles, applied to bound empirical processes, are key tools in the theory of unquantized matrix completion.  The empirical process that we must bound to prove Lemma \ref{lem:concentration} is a discrete analog of a similar process considered in Section 5 of \cite{negahban2010restricted}.
\end{remark}

\begin{proof}[Proof of Lemma \ref{lem:concentration}]

We begin by noting that for any $h > 0$, by using Markov's inequality we have that
\begin{align}
& \Pr{\sup_{\X \in G}  | \bar{\loglike}_{\Omega, \Y}(\X) - \E{ \bar{\loglike}_{\Omega, \Y}(\X) }| \geq C_0 \alpha L_\gamma \sqrt{r}\sqrt{\meas (d_1 + d_2) + d_1 d_2 \log(d_1 d_2)}} \notag \\
& = \Pr{\sup_{\X \in G}  | \bar{\loglike}_{\Omega, \Y}(\X) - \E{ \bar{\loglike}_{\Omega, \Y}(\X) }|^h \geq \left(C_0 \alpha L_\gamma \sqrt{r}\sqrt{\meas (d_1 + d_2) + d_1 d_2 \log(d_1 d_2)}\right)^h } \notag \\
& \le \frac{\Exp{\sup_{\X \in G}  | \bar{\loglike}_{\Omega, \Y}(\X) - \E{ \bar{\loglike}_{\Omega, \Y}(\X) }|^h}}{\left(C_0 \alpha L_\gamma \sqrt{r}\sqrt{\meas (d_1 + d_2) + d_1 d_2 \log(d_1 d_2)}\right)^h}. \label{eq:Markov}
\end{align}
The bound in~\eqref{eq:supProbBound} will follow by combining this with an upper bound on $\Exp{\sup_{\X \in G} |\bar{\loglike}_{\Omega, \Y}(\X) - \E{\bar{\loglike}_{\Omega, \Y}(\X)}|^h}$ and setting $h = \log(d_1 + d_2)$.  Towards this end, note that we can write the definition of $\bar{\loglike}_{\Omega, \Y}$ as
\[
    \bar{\loglike}_{\Omega, \Y}(\X) = \sum_{i,j} \left( \ind{(i,j) \in \Omega} \left( \ind{Y_{i,j} = 1} \log\left( \frac{f(X_{i,j}) }{f(0)} \right) + \ind{Y_{i,j} = -1} \log\left( \frac{ 1 - f(X_{i,j}) }{1 - f(0)} \right) \right)\right).
\]
By a symmetrization argument (Lemma 6.3 in~\cite{LT}),
\begin{align*}
& \Exp{\sup_{\X \in G} |\bar{\loglike}_{\Omega, \Y}(\X) - \E{\bar{\loglike}_{\Omega, \Y}(\X)} |^h }\\
& \qquad \qquad \leq 2^h \, \Exp{ \sup_{\X \in G} \left| \sum_{i,j} \eps_{i,j} \ind{(i,j) \in \Omega} \left( \ind{Y_{i,j} = 1} \log\left( \frac{f(X_{i,j})}{ f(0)}  \right) + \ind{Y_{i,j} = -1} \log\left( \frac{1 - f(X_{i,j})}{ 1- f(0)} \right) \right)\right|^h},
\end{align*}
where the $\eps_{i,j}$ are i.i.d.\ Rademacher random variables and the expectation in the upper bound is with respect to both $\Omega$ and $\Y$ as well as with respect to the $\eps_{i,j}$.  To bound the latter term, we apply a contraction principle (Theorem 4.12 in~\cite{LT}). By the definition of $L_\gamma$ and the assumption that $\|\Mhat\|_{\infty} \leq \gamma$, both
\[ \frac{1}{L_\gamma} \log\left( \frac{f(x)}{f(0)} \right) \qquad \text{and} \qquad  \frac{1}{L_\gamma} \log\left(\frac{1 - f(x)}{1 - f(0)}\right)\]
 are contractions that vanish at $0$.  Thus, up to a factor of $2$, the expected value of the supremum can only decrease when these are replaced by $X_{i,j}$ and $-X_{i,j}$ respectively.  We obtain
\begin{align}
\Exp{\sup_{\X \in G} \left|\bar{\loglike}_{\Omega, \Y}(\X)  - \E{\bar{\loglike}_{\Omega, \Y}(\X)} \right|^h } &\leq 2^h (2L_\gamma)^h \ \Exp{ \sup_{\X \in G} \left|\sum_{i,j} \eps_{i,j} \ind{(i,j) \in \Omega} \left( \ind{Y_{i,j} = 1} X_{i,j} - \ind{Y_{i,j} = -1} X_{i,j} \right)\right|^h} \notag \\
& = (4L_\gamma)^h \ \Exp{ \sup_{\X \in G} \left|\ip{ \Delta_{\Omega} \had \mE \had \Y }{ \X }\right|^h }, \label{eq:supbound1}
\end{align}
where $\mE$ denotes the matrix with entries given by $\eps_{i,j}$, $\Delta_\Omega$ denotes the indicator matrix for $\Omega$ (so that $[\Delta_\Omega]_{i,j} = 1$ if $(i,j)\in \Omega$ and 0 otherwise), and~$\had$ denotes the Hadamard product.  Using the facts that the distribution of $\mE \had \Y$ is the same as the distribution of $\mE$ and that $| \langle \A, \B \rangle | \le \| \A \| \| \B \|_{*}$, we have that
\begin{align}
\Exp{ \sup_{\X \in G} \left|\ip{ \Delta_{\Omega} \had \mE \had  \Y }{ \X }\right|^h } & = \Exp{ \sup_{\X \in G} \left|\ip{ \mE \had \Delta_{\Omega} } {\X} \right|^h} \label{eq:supbound simple} \\
& \leq  \Exp{ \sup_{\X \in G} \norm{\mE \had \Delta_{\Omega} }^h \nucnorm{\X}^h} \notag \\
& = \left(\alpha \sqrt{d_1 d_2 r}\right)^h \ \Exp{ \|\mE \had \Delta_{\Omega} \|^h}, \label{eq:supbound2}
\end{align}
To bound $\Exp{\|\mE \had \Delta_{\Omega}\|^h}$, observe that $\mE \had \Delta_{\Omega}$ is a matrix with i.i.d.\ zero mean entries and thus by Theorem~1.1 of~\cite{seginer2000expected},
\[
    \Exp{\opnorm{\mE \had \Delta_{\Omega}}^h} \leq  C\left(\Exp{\max_{1 \leq i \leq d_1} \left(\sum_{j = 1}^{d_2} \Delta_{i,j}\right)^{\frac{h}{2}}} + \Exp{\max_{1 \leq j \leq d_2} \left(\sum_{i = 1}^{d_1} \Delta_{i,j}\right)^{\frac{h}{2}}}\right)
\]
for some constant $C$.  This in turn implies that
\begin{equation}
\label{eq:opnormbound1}
    \left( \Exp{\opnorm{\mE \had \Delta_{\Omega}}^h} \right)^{\frac{1}{h}} \leq  C^{\frac{1}{h}} \left( \left(\Exp{\max_{1 \leq i \leq d_1} \left(\sum_{j = 1}^{d_2} \Delta_{i,j}\right)^{\frac{h}{2}}} \right)^{\frac{1}{h}} +   \left(\Exp{\max_{1 \leq j \leq d_2} \left(\sum_{i = 1}^{d_1} \Delta_{i,j}\right)^{\frac{h}{2}}} \right)^{\frac{1}{h}} \right).
\end{equation}
We first focus on the row sum $\sum_{j=1}^{d_2} \Delta_{i,j}$ for a particular choice of $i$. Using Bernstein's inequality, for all $t > 0$ we have
\[
\Pr{ \left| \sum_{j = 1}^{d_2} \left( \Delta_{i,j} - \frac{\meas }{d_1d_2}\right) \right| > t } \leq 2\exp\left( \frac{-t^2/2}{\meas /d_1 + t/3}\right).
\]
In particular, if we set $t \geq 6 \meas /d_1$, then for each $i$ we have
\begin{equation}
\label{eq:tailcompare}
    \Pr{ \left| \sum_{j=1}^{d_2} \left( \Delta_{i,j} - \frac{\meas }{d_1d_2}\right)\right| > t} \leq 2\exp(-t) = 2\Pr{ W_i > t },
\end{equation}
where $W_1, \ldots, W_{d_1}$ are i.i.d.\ exponential random variables.

Below we use the fact that for any positive random variable $q$ we can write $\E q = \int_0^\infty \Pr{q \geq t}$, allowing us to bound
{\allowdisplaybreaks
\begin{align*}
\left(\Exp{\max_{1 \leq i \leq d_1} \left(\sum_{j = 1}^{d_2} \Delta_{i,j}\right)^{\frac{h}{2}}}\right)^{\frac{1}{h}} &\leq \sqrt{\frac{\meas }{d_1}} + \left( \Exp{\max_{1 \leq i \leq d_1} \left| \sum_{j=1}^{d_2} \left( \Delta_{i,j} - \frac{\meas }{d_1d_2}  \right) \right|^{\frac{h}{2}} }\right)^{\frac{1}{h}} \\
&\leq \sqrt{\frac{\meas }{d_1}} + \left( \Exp{\max_{1 \leq i \leq d_1} \left| \sum_{j=1}^{d_2} \left( \Delta_{i,j} - \frac{\meas }{d_1d_2}  \right) \right|^{h} }\right)^{\frac{1}{2h}} \\
&= \sqrt{\frac{\meas }{d_1}} + \left(\int_0^\infty \Pr{ \max_{1 \leq i \leq d_1} \left| \sum_{j=1}^{d_2} \left( \Delta_{i,j} - \frac{\meas }{d_1d_2} \right)\right|^h \geq t}\,dt  \right)^{\frac{1}{2h}}\\
&\leq \sqrt{\frac{\meas }{d_1}} + \left( \left(\frac{6 \meas }{d_1}\right)^h + \int_{(6 \meas /d_1)^h}^\infty \Pr{ \max_{1 \leq i \leq d_1} \left| \sum_{j=1}^{d_2} \left( \Delta_{i,j} - \frac{\meas }{d_1d_2} \right)\right|^h \geq t}\,dt  \right)^{\frac{1}{2h}}\\
&\leq \sqrt{\frac{\meas }{d_1}} + \left( \left(\frac{6 \meas }{d_1}\right)^h + 2 \int_{(6 \meas /d_1)^h}^\infty \Pr{ \max_{1 \leq i \leq d_1} W_i^h \geq t}\,dt  \right)^{\frac{1}{2h}}\\
&\leq \sqrt{\frac{\meas }{d_1}} + \left( \left(\frac{6 \meas }{d_1}\right)^h + 2 \Exp{ (\max_{1 \leq i \leq d_1} W_i)^h }  \right)^{\frac{1}{2h}}.
\end{align*}
Above, we have used the triangle inequality in the first line, followed by Jensen's inequality in the second line.  In the fifth line, \eqref{eq:tailcompare}, along with independence, allows us to introduce $\max_i W_i$.  By standard computations for exponential random variables,
\begin{align*}
\Exp{ \max_{1 \leq i \leq d_1} W_i^h } &\leq \Exp{ \left| \max_{1 \leq i \leq d_1} W_i - \log d_1 \right|^h } + \log^h(d_1) \\
&\leq 2 h! + \log^h(d_1) .
\end{align*}
Thus, we have
\begin{align*}
\left(\Exp{\max_{1 \leq i \leq d_1} \left(\sum_{j = 1}^{d_2} \Delta_{i,j}\right)^{\frac{h}{2}}}\right)^{\frac{1}{h}}
&\leq \sqrt{\frac{\meas }{d_1}} + \left( \left(\frac{6 \meas }{d_1}\right)^h + 2\left(\log^h(d_1) + 2(h!)\right)   \right)^\frac{1}{2h}\\
&\leq (1 + \sqrt{6})\sqrt{ \frac{\meas }{d_1}} + 2^\frac{1}{2h}\left(\sqrt{\log d_1} + 2^\frac{1}{2h}\sqrt{h}\right)\\
&\leq (1 + \sqrt{6})\sqrt{ \frac{\meas }{d_1}} + (2 + \sqrt{2}) \sqrt{ \log(d_1 + d_2) },
\end{align*}
using the choice $h = \log(d_1 + d_2) \geq 1$ in the final line.}

A similar argument bounds the column sums, and thus from~\eqref{eq:opnormbound1} we conclude that
\begin{align*}
\left(\Exp{\opnorm{\mE \had \Delta_{\Omega}}^{h}}\right)^{\frac{1}{h}} & \leq C^{\frac{1}{h}} \left( (1 + \sqrt{6}) \left( \sqrt{ \frac{\meas }{d_1} } + \sqrt{ \frac{\meas }{d_2} }\right) + (2 + \sqrt{2}) \sqrt{ \log(d_1 + d_2)} \right) \\
 & \leq C^{\frac{1}{h}} \left( (1 + \sqrt{6}) \left( \sqrt{ \frac{2 \meas (d_1+d_2)}{d_1 d_2} } \right) + (2 + \sqrt{2}) \sqrt{ \log(d_1 + d_2)} \right) \\
 & \leq C^{\frac{1}{h}} 2 (1+ \sqrt{6}) \sqrt{ \frac{ \meas (d_1 + d_2) + d_1 d_2 \log(d_1 + d_2) }{d_1 d_2} },
\end{align*}
where the second and third inequalities both follow from Jensen's inequality.  Combining this with~\eqref{eq:supbound1} and~\eqref{eq:supbound2}, we obtain
\[
    \left(\Exp{\sup_{\X \in G} \left|\bar{\loglike}_{\Omega, \Y}(\X)  - \E{\bar{\loglike}_{\Omega, \Y}(\X)} \right|^h } \right)^{\frac{1}{h}} \leq C^{\frac{1}{h}} 8 (1 + \sqrt{6})  \alpha L_\gamma \sqrt{r} \sqrt{ \meas (d_1 + d_2) + d_1 d_2 \log(d_1 + d_2)}.
\]
Plugging this into~\eqref{eq:Markov} we obtain that the probability in~\eqref{eq:Markov} is upper bounded by
\[
    C \left( \frac{8 (1 + \sqrt{6})}{C_0}  \right)^{\log(d_1 + d_2)} \le \frac{C}{d_1 + d_2},
\]
provided that $C_0 \ge 8(1+\sqrt{6})/e$, which establishes the lemma.
\end{proof}

\subsection{Proof of Theorem \ref{thm:recover M}}

The proof of Theorem~\ref{thm:recover M} follows immediately from Theorem \ref{thm:recover distribution with constraints} (with $\gamma = \alpha$) combined with the following lemma.

\begin{lemma}
Let $f$ be a differentiable function and let $\inftynorm{\M}, \|\Mhat\|_{\infty} \leq \alpha$.  Then
\[
    d_H^2(f(\M), f(\Mhat)) \geq \inf_{\abs{\xi} \leq \alpha} \frac{(f'(\xi))^2}{8 f(\xi)(1 - f(\xi))} \frac{\| \M - \Mhat\|_{F}^2}{d_1 d_2}.
\]
\end{lemma}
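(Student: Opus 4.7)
The plan is to reduce the statement to a per-entry scalar inequality and then prove that scalar inequality using a trigonometric reformulation of the Hellinger distance. Since $d_H^2(f(\M),f(\Mhat))$ is by definition $\tfrac{1}{d_1d_2}\sum_{i,j} d_H^2(f(M_{i,j}),f(\widehat M_{i,j}))$ and $\tfrac{1}{d_1d_2}\|\M-\Mhat\|_F^2$ is $\tfrac{1}{d_1d_2}\sum_{i,j}(M_{i,j}-\widehat M_{i,j})^2$, it suffices to show that for every pair of scalars $x,y\in[-\alpha,\alpha]$,
\[
d_H^2(f(x),f(y))\;\geq\;\frac{(f'(\xi))^2}{8\,f(\xi)(1-f(\xi))}(x-y)^2\quad\text{for some }|\xi|\leq\alpha,
\]
since then taking the infimum and summing yields the stated bound.

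The key trick is the classical identity
\[
\sqrt{pq}+\sqrt{(1-p)(1-q)}=\cos\bigl(\arcsin\sqrt{p}-\arcsin\sqrt{q}\bigr),
\]
obtained from the angle-difference formula with $\sin\theta=\sqrt{p}$, $\cos\theta=\sqrt{1-p}$. Setting $p=f(x)$, $q=f(y)$, this gives
\[
d_H^2(f(x),f(y))\;=\;2\bigl(1-\cos\Delta\bigr)\;=\;4\sin^2(\Delta/2),\qquad \Delta:=\arcsin\sqrt{f(x)}-\arcsin\sqrt{f(y)}.
\]
Because $\arcsin\sqrt{\cdot}$ takes values in $[0,\pi/2]$, we have $|\Delta|\le\pi/2$ and hence $|\Delta/2|\le\pi/4$.

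Now I lower-bound $\sin^2$ on $[-\pi/4,\pi/4]$: the elementary inequality $\sin z\geq z\cos z$ on $[0,\pi]$ (valid since the derivative of $\sin z-z\cos z$ is $z\sin z\ge 0$, and it vanishes at $0$), combined with $\cos z\geq \cos(\pi/4)=1/\sqrt{2}$ on $[0,\pi/4]$, yields $\sin^2(z)\geq z^2/2$ for $|z|\le \pi/4$. Applying this with $z=\Delta/2$ gives
\[
d_H^2(f(x),f(y))\;\geq\;4\cdot\tfrac{(\Delta/2)^2}{2}\;=\;\tfrac{1}{2}\Delta^2.
\]

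Finally, differentiability of $f$ (and of $\arcsin\sqrt{\cdot}$ on the relevant range, which is where we implicitly use $0<f<1$ on $[-\alpha,\alpha]$; otherwise the right-hand side of the lemma is either $0$ or $+\infty$ and the claim is trivial) lets us apply the mean value theorem to $h(u):=\arcsin\sqrt{f(u)}$, whose derivative is $h'(u)=f'(u)/\bigl(2\sqrt{f(u)(1-f(u))}\bigr)$. This yields some $\xi$ between $x$ and $y$ (hence $|\xi|\le\alpha$) with
\[
\Delta=h(x)-h(y)=h'(\xi)(x-y),\qquad
\Delta^2=\frac{(f'(\xi))^2}{4\,f(\xi)(1-f(\xi))}(x-y)^2,
\]
so that $d_H^2(f(x),f(y))\ge\tfrac{1}{2}\Delta^2=\tfrac{(f'(\xi))^2}{8\,f(\xi)(1-f(\xi))}(x-y)^2$, completing the per-entry bound.

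The one non-obvious move is the choice of function to which we apply the MVT. A naive application to $f$ itself produces two different intermediate points (one from expanding $f(x)-f(y)$, another from controlling $f(\xi)(1-f(\xi))$), which does not give a single infimum of $(f')^2/(f(1-f))$. Pushing the MVT through $\arcsin\sqrt{f}$ packages the two appearances of $\xi$ into a single point, which is exactly what makes the bound come out clean with the constant $1/8$.
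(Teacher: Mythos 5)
Your proof is correct, but it takes a different route from the paper's. The paper also reduces to a per-entry scalar bound and also hinges on applying the mean value theorem to a single composite function so that only one intermediate point $\xi$ appears, but the composite function and the source of the factor $\tfrac12$ differ. The paper writes $d_H^2(p,q)=a^2+b^2\ge\tfrac12(a-b)^2$ with $a=\sqrt{f(x)}-\sqrt{f(y)}$, $b=\sqrt{1-f(x)}-\sqrt{1-f(y)}$, notes that $a-b=g(x)-g(y)$ for $g(u)=\sqrt{f(u)}-\sqrt{1-f(u)}$, applies the MVT to $g$, and then uses $(u+v)^2\ge u^2+v^2$ for same-signed $u,v$ to reach $\frac{(f'(\xi))^2}{4f(\xi)(1-f(\xi))}$. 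You instead use the exact identity $d_H^2(p,q)=4\sin^2\bigl((\arcsin\sqrt{p}-\arcsin\sqrt{q})/2\bigr)$, the small-angle bound $\sin^2 z\ge z^2/2$ on $[-\pi/4,\pi/4]$, and the MVT applied to $h(u)=\arcsin\sqrt{f(u)}$, whose derivative squares \emph{exactly} to $\frac{(f'(\xi))^2}{4f(\xi)(1-f(\xi))}$. Your version is arguably a bit cleaner: the Hellinger distance is rewritten as an exact function of the Fisher/Bhattacharyya angle and the only lossy step is the sharp bound on $\sin^2$, whereas the paper needs two separate elementary inequalities; on the other hand, the paper's argument is more elementary (no trigonometric identity) and generalizes with no thought about the range of $\arcsin$. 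Both land on the same constant $1/8$, and both implicitly need $f\in(0,1)$ on $[-\alpha,\alpha]$, which the paper assumes globally when defining $L_\alpha$ and $\beta_\alpha$; your remark on that edge case is adequate. Your closing observation about why the MVT must be pushed through a single composite function is exactly the right diagnosis and is the same device the paper uses, just with $\sqrt{f}-\sqrt{1-f}$ in place of $\arcsin\sqrt{f}$.
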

\begin{proof}
For any pair of entries $x = M_{i,j}$ and $y = \Mhatel_{i,j}$,  write
\[
    \left(\sqrt{f(x)} - \sqrt{f(y)}\right)^2 + \left(\sqrt{ 1- f(x)} - \sqrt{1 - f(y)}\right)^2 \geq \frac{1}{2} \left( \left( \sqrt{f(x)} - \sqrt{f(y)} \right) - \left( \sqrt{f(x)} - \sqrt{f(y)} \right)\right)^2.
\]
Using Taylor's theorem to expand the quantity inside the square, for some $\xi$ between $x$ and $y$,
\begin{align*}
\left(\sqrt{f(x)} - \sqrt{f(y)}\right)^2 + \left(\sqrt{ 1- f(x)} - \sqrt{1 - f(y)}\right)^2
&\geq \frac{1}{2} \left( \frac{ f'(\xi)(y-x) }{ 2 \sqrt{f(\xi)}} + \frac{ f'(\xi)(y-x)}{2 \sqrt{ 1- f(\xi)}} \right)^2\\
&\geq \frac{1}{8} (f'(\xi))^2 (y-x)^2 \left( \frac{1}{f(\xi)} + \frac{1}{1 - f(\xi)}\right)\\
&= \frac{(f'(\xi))^2}{8 f(\xi)(1 - f(\xi))} (y-x)^2.
\end{align*}
The lemma follows by summing across all entries and dividing by $d_1 d_2$.
\end{proof}

\subsection{Lower bounds}
\label{sec:lowerbounds}

The proofs of our lower bounds each follow a similar outline, using classical information theoretic techniques that have also proven useful in the context of compressed sensing~\cite{candes2011well,raskutti2009minimax}.  At a high level, our argument involves first showing the existence of a set of matrices $\mathcal{X}$, so that for each $\X^{(i)} \neq \X^{(j)} \in \mathcal{X}$, $\|\X^{(i)} - \X^{(j)}\|_{F}$ is large.  We will imagine obtaining measurements of a randomly chosen matrix in $\mathcal{X}$ and then running an arbitrary recovery procedure.  If the recovered matrix is sufficiently close to the original matrix, then we could determine which element of $\mathcal{X}$ was chosen.  However, Fano's inequality will imply that the probability of correctly identifying the chosen matrix is small, which will induce a lower bound on how close the recovered matrix can be to the original matrix.

In the proofs of Theorems \ref{thm:one_bit_lowerbound}, \ref{thm:lowerbound}, and \ref{thm:distribution lower bound},  we will assume without loss of generality that $d_2 \ge d_1$.  Before providing these proofs, however, we first consider the construction of the set $\mathcal{X}$.

\subsubsection{Packing set construction}

\begin{lemma}
\label{lem:existsX}
Let $K$ be defined as in~\eqref{eq:defofK}, let $\gamma \leq 1$ be such that $\frac{r}{\gamma^2}$ is an integer, and suppose that $\frac{r}{\gamma^2} \leq d_1$. There is a set $\mathcal{X} \subset K$ with
\[
    |\mathcal{X}| \ge \exp\left( \frac{r d_2}{ 16 \gamma^2}\right)
\]
with the following properties:
\begin{enumerate}
\item For all $\X \in \mathcal{X}$, each entry has $|X_{i,j}| = \alpha \gamma$.
\item \label{item:distance} For all $\X^{(i)}, \X^{(j)} \in \mathcal{X}$, $i \neq j$,
\[
    \|\X^{(i)} - \X^{(j)}\|_{F}^2 >  \frac{\alpha^2 \gamma^2 d_1d_2}{2}.
\]
\end{enumerate}
\end{lemma}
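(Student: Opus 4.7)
The strategy is a standard Gilbert--Varshamov style packing: I will build matrices whose rows are block-constant, with the pattern on each block drawn from $\{-\alpha\gamma, +\alpha\gamma\}^{d_2}$. Block-constancy forces low rank, and the binary values of the block patterns reduce the packing problem for $\|\cdot\|_F$ to one for the Hamming distance on bit strings, which is handled by the probabilistic method.

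More precisely, set $k := r/\gamma^2$, which is a positive integer with $k \leq d_1$ by hypothesis. Partition $[d_1]$ into $k$ blocks of equal size $d_1/k$ (the minor divisibility issue is handled below). To each ``pattern matrix'' $\V \in \{-\alpha\gamma, +\alpha\gamma\}^{k \times d_2}$ associate the $d_1 \times d_2$ matrix $\X(\V)$ whose rows in the $b$-th block are all equal to the $b$-th row of $\V$. Then every entry of $\X(\V)$ has magnitude $\alpha\gamma$ (verifying property 1), $\inftynorm{\X(\V)} = \alpha\gamma \le \alpha$, and $\rank(\X(\V)) \le k$, so
\[
\nucnorm{\X(\V)} \le \sqrt{k}\,\fronorm{\X(\V)} \le \sqrt{k}\cdot \alpha\gamma\sqrt{d_1 d_2} = \alpha\sqrt{rd_1d_2}.
\]
Thus $\X(\V) \in K$ for every choice of $\V$.

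Next, draw pattern matrices $\V^{(1)}, \V^{(2)}, \ldots$ independently and uniformly from $\{\pm\alpha\gamma\}^{k\times d_2}$, viewing each as a bit string of length $n := kd_2$. For any fixed pair $i\neq j$, Hoeffding's inequality gives
\[
\Pr{d_H(\V^{(i)}, \V^{(j)}) \leq n/4} \leq \exp(-n/8).
\]
Taking $M := \lceil \exp(n/16) \rceil$ samples and applying a union bound over the $\binom{M}{2}$ pairs, the probability that some pair violates the Hamming-distance lower bound is at most $\binom{M}{2}\exp(-n/8) < 1$ for $n \geq 1$. Hence there exists a family $\V^{(1)},\ldots,\V^{(M)}$ with $M \geq \exp(rd_2/(16\gamma^2))$ and pairwise Hamming distance exceeding $n/4 = kd_2/4$. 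For such a pair, each differing entry of $\V$ contributes $(2\alpha\gamma)^2$ to $\fronorm{\X^{(i)}-\X^{(j)}}^2$ in $d_1/k$ different rows, so
\[
\fronorm{\X^{(i)} - \X^{(j)}}^2 \;\geq\; \frac{d_1}{k}\cdot 4\alpha^2\gamma^2 \cdot \frac{kd_2}{4} \;=\; \alpha^2\gamma^2 d_1 d_2 \;>\; \frac{\alpha^2\gamma^2 d_1 d_2}{2},
\]
establishing property 2. Setting $\mathcal{X} := \{\X(\V^{(1)}), \ldots, \X(\V^{(M)})\}$ yields the desired packing.

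The one technical wrinkle is that $k$ need not divide $d_1$; this is the only mildly annoying step, but not a real obstacle. We can take $\lfloor d_1/k\rfloor$ rows in each block and fill the remaining $d_1 - k\lfloor d_1/k\rfloor < k$ rows with an arbitrary fixed $\pm\alpha\gamma$ pattern common to all members of $\mathcal{X}$; the Frobenius-distance computation then loses at most a multiplicative factor of $\lfloor d_1/k\rfloor/(d_1/k) \ge 1/2$ (easily absorbed by the generous slack between $\alpha^2\gamma^2 d_1 d_2$ and the required $\alpha^2\gamma^2 d_1 d_2/2$), while the nuclear-norm bound and the cardinality bound are unaffected.
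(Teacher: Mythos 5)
Your construction is correct and is essentially the paper's own proof: the same block-repeated $\pm\alpha\gamma$ random sign pattern with $k=r/\gamma^2$ distinct rows, the same rank/nuclear-norm estimate $\nucnorm{\X}\le\sqrt{k}\,\fronorm{\X}=\alpha\sqrt{rd_1d_2}$, and the same Hoeffding-plus-union-bound packing with $M=\lceil e^{kd_2/16}\rceil$ and the factor-of-two loss from $\lfloor d_1/k\rfloor\ge d_1/(2k)$. The one detail to tighten is the divisibility remark: the leftover $d_1-k\lfloor d_1/k\rfloor$ rows should be filled with copies of rows already present in the pattern (the paper does this by cyclic repetition), not an ``arbitrary fixed pattern common to all members,'' since a genuinely new row could raise the rank to $k+1$ and spoil the bound that places $\X$ in $K$.
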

\begin{proof}
We use a probabilistic argument.  The set $\mathcal{X}$ will by constructed by drawing
\begin{equation}
\label{eq:size X}
    |\mathcal{X}| = \left\lceil \exp\left( \frac{r d_2}{ 16 \gamma^2}\right) \right\rceil
\end{equation}
matrices $\X$ independently from the following distribution.  Set $B = \frac{r}{\gamma^2}$.  The matrix will consist of blocks of dimensions $B  \times d_2$, stacked on top of each other.  The entries of the first block (that is, $X_{i,j}$ for $(i,j) \in [B] \times [d_2]$) will be i.i.d. symmetric random variables with values $\pm \alpha \gamma$.  Then $\X$ will be filled out by copying this block as many times as will fit.  That is,
\[
    X_{i, j} := X_{i',j} \text{\ \ where \ \ } i' = i \,\,(\text{mod} \, B) + 1.
\]

Now we argue that with nonzero probability, this set will have all the desired properties.  For $\X \in \mathcal{X}$,
\[
    \|\X\|_\infty = \alpha \gamma \leq \alpha.
\]
Further, because $\rank{\X} \le B$,
\[
    \nucnorm{\X} \leq \sqrt{B} \fronorm{\X} = \sqrt{\frac{r}{\gamma^2}}\sqrt{d_1d_2}\alpha\gamma = \alpha \sqrt{r d_1d_2}.
\]
Thus $\mathcal{X} \subset K$, and all that remains is to show that $\mathcal{X}$ satisfies requirement \ref{item:distance}.

For $\X, \W$ drawn from the above distribution,
\begin{align*}
\fronorm{\X - \W}^2 &= \sum_{i,j}(X_{i,j} - W_{i,j})^2\\
&\geq  \left\lfloor \frac{d_1}{B}\right\rfloor \sum_{i \in [B]} \sum_{j \in [d_2]} (X_{i,j} - W_{i,j})^2\\
&= 4\alpha^2\gamma^2  \left\lfloor \frac{d_1}{B}\right\rfloor \sum_{i \in [B ]} \sum_{j \in [d_2]} \delta_{i,j} \\
&=: 4\alpha^2\gamma^2 \left\lfloor \frac{d_1}{B}\right\rfloor Z(\X,\W).
\end{align*}
where the $\delta_{i,j}$ are independent $0/1$ Bernoulli random variables with mean $1/2$.  By Hoeffding's inequality and a union bound,
\[
    \Pr{ \min_{\X \neq \W \in \mathcal{X}}  Z(\X,\W)  \leq \frac{ d_2 B}{4} } \leq \binom{ \abs{\mathcal{X}}}{2} \exp(-B d_2/8).
\]
One can check that for $\X$ of the size given in~\eqref{eq:size X}, the right-hand side of the above tail bound is less than 1, and thus the event that $Z(\X, \W) > d_2 B/4$ for all $\X \neq \W \in \mathcal{X}$ has non-zero probability.  In this event,
\[
    \fronorm{\X - \W}^2 > \alpha^2 \gamma^2 \left\lfloor \frac{d_1}{B} \right\rfloor d_2 B \geq  \frac{\alpha^2 \gamma^2 d_1d_2}{2},
\]
where the second inequality uses the assumption that $d_1 \geq B$ and the fact that $\lfloor x \rfloor \geq x/2$ for all $x \geq 1$.   Hence, requirement (\ref{item:distance}) holds with nonzero probability and thus the desired set exists.
\end{proof}

\subsubsection{Proof of Theorem \ref{thm:one_bit_lowerbound}}

Before we prove Theorem \ref{thm:one_bit_lowerbound}, we will need the following lemma about the KL divergence.
\begin{lemma}\label{lem:kl_div}
Suppose that $ x , y \in (0,1)$.  Then
\[
    D(x\|y) \leq \frac{ (x - y)^2 }{ y (1 - y) }.
\]
\end{lemma}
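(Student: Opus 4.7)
The plan is to prove the bound by applying the elementary inequality $\log z \le z - 1$ (equivalently $\log(1+t) \le t$) to each of the two logarithmic terms in the definition of $D(x\|y)$, and then simplifying algebraically.

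First I would apply $\log z \le z-1$ with $z = x/y$ to obtain
\[
\log(x/y) \le \frac{x-y}{y},
\]
and similarly with $z = (1-x)/(1-y)$ to obtain
\[
\log\!\left(\frac{1-x}{1-y}\right) \le \frac{y-x}{1-y}.
\]
Multiplying the first of these by $x \ge 0$ and the second by $1-x \ge 0$, and adding, gives
\[
D(x\|y) \;\le\; \frac{x(x-y)}{y} + \frac{(1-x)(y-x)}{1-y} \;=\; (x-y)\!\left[\frac{x}{y} - \frac{1-x}{1-y}\right].
\]

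Finally I would simplify the bracketed expression by putting it over the common denominator $y(1-y)$:
\[
\frac{x}{y} - \frac{1-x}{1-y} \;=\; \frac{x(1-y) - y(1-x)}{y(1-y)} \;=\; \frac{x - y}{y(1-y)},
\]
which yields $D(x\|y) \le (x-y)^2 / \bigl(y(1-y)\bigr)$ as desired. The argument is entirely elementary; there is no real obstacle, since both $x$ and $1-x$ are nonnegative (so the direction of the inequality is preserved when we multiply through), and the final algebraic identity is just a common-denominator computation. The only conceptual point worth noting is that the bound becomes tight in the limit $x \to y$, where both $D(x\|y)$ and $(x-y)^2/(y(1-y))$ behave like $(x-y)^2/(2y(1-y))$, so the inequality is off by at most a factor of $2$ near the diagonal, which is precisely the regime in which it will be applied in the lower-bound proofs.
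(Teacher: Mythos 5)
Your proof is correct, and it takes a genuinely different route from the paper's. You bound each logarithm by the elementary inequality $\log z \le z-1$ and then combine the two resulting terms over the common denominator $y(1-y)$; this is exactly the classical bound of the Kullback--Leibler divergence by the $\chi^2$-divergence, specialized to a pair of Bernoulli distributions, and every step checks out (the direction of the inequality is preserved because you multiply the two logarithmic bounds by the nonnegative weights $x$ and $1-x$, and the final common-denominator identity is immediate). The paper instead first reduces to the case $x \le y$ via the symmetry $D(1-x\|1-y)=D(x\|y)$, views $D(x\|x+z)$ as a function of $z=y-x$, computes its derivative in $z$, and invokes Taylor's theorem together with a monotonicity argument to replace the intermediate point $\xi$ by $z$. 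Your argument is shorter and avoids both the preliminary reduction and the calculus; the paper's derivative computation makes visible that the inequality loses roughly a factor of $2$ near $x=y$ (the observation you append at the end), but since only the stated one-sided bound is used in the lower-bound proofs, either derivation serves equally well.
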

\begin{proof}
Without loss of generality, we may assume that $x \le  y$.  Indeed, $D( 1 - x \| 1 - y ) = D(x\|y)$, and either $x \le  y$ or $1 - x \le 1 - y$.
Let $z = y - x$.  A simple computation shows that
\[
    \frac{\partial}{\partial z}  D(x \| x + z ) = \frac{z}{(x + z)(1 - x - z)}.
\]
Thus, by Taylor's theorem, there is some $\xi \in [0, z]$ so that
\[
    D(x \| y) = D( x\| x) + z \left( \frac{ \xi }{ (x + \xi)(1 - x - \xi) }\right).
\]
Since the right hand side is increasing in $\xi$, we may replace $\xi$ with $z$ and conclude
\[
    D(x \| y) \leq \frac{ (y - x)^2 }{y(1 - y)},
\]
as desired.
\end{proof}

Now, for the proof of Theorem \ref{thm:one_bit_lowerbound}, we choose $\epsilon$ so that
\begin{equation}\label{eq:defeps}
    \epsilon^2 = \min\left\{ \frac{1}{1024}, C_2 \alpha\sqrt{\beta_{3\alpha/4}} \sqrt{\frac{r d_2}{\meas } }\right\},
\end{equation}
where $C_2$ is an absolute constant to be specified later. We will next use Lemma \ref{lem:existsX} to construct a set $\mathcal{X}$, choosing $\gamma$ so that $\frac{r}{\gamma^2}$ is an integer and
\[
	\frac{4\sqrt{2} \epsilon}{\alpha} \leq \gamma \leq \frac{8 \epsilon}{\alpha}.
\]
We can make such a choice because $\epsilon \leq 1/32$ and $r \geq 4$.  We verify that such a choice for $\gamma$ satisfies the requirements of Lemma \ref{lem:existsX}.  Indeed, since $\epsilon \leq 1/32$ and $\alpha \geq 1$ we have $\gamma \leq 1/4 < 1$.  Further, we assume in the theorem that the right-hand side of~\eqref{eq:defeps} is larger than $C r \alpha^2 / d_1$ which implies that $r/\gamma^2 \leq d_1$ for an appropriate choice of $C$.

Let $\mathcal{X}'_{\alpha/2,\gamma}$ be the set whose existence is guaranteed by Lemma~\ref{lem:existsX} with this choice of $\gamma$, and with $\alpha/2$ instead of $\alpha$. We will construct $\mathcal{X}$ by setting
\[
	\mathcal{X} := \left\{ \X' + \alpha \left(1 - \frac{\gamma}{2} \right) \bone \ : \ \X' \in \mathcal{X}'_{\alpha/2,\gamma} \right\}
\]
Note that $\mathcal{X}$ has the same size as $\mathcal{X}'_{\alpha/2,\gamma}$, i.e., $|\mathcal{X}|$ satisfies~\eqref{eq:size X}.  $\mathcal{X}$ also has the same bound on pairwise distances
\begin{equation}\label{eq:spacing}
    \| \X^{(i)} - \X^{(j)} \|_{F}^2 \geq \frac{\alpha^2 \gamma^2 d_1d_2}{8} \geq 4 d_1d_2\epsilon^2,
\end{equation}
and every entry of $\X \in \mathcal{X}$ has
\[
    |X_{i,j}| \in \{  \alpha , \alpha' \},
\]
where $\alpha' = (1 - \gamma)\alpha$. Further, since for $\X' \in \mathcal{X}'_{\alpha/2,\gamma}$,
\[
    \nucnorm{\X' + \alpha(1 - \gamma/2) \bone} \leq \nucnorm{\X'} + \alpha(1 - \gamma/2) \sqrt{d_1d_2} \leq \alpha \sqrt{rd_1d_2}
\]
for $r \ge 4$ as in the theorem statement.

Now suppose for the sake of a contradiction that there exists an algorithm such that for any $\X \in K$, when given access to the measurements $\Y_\Omega$, returns an $\widehat{\X}$ such that
\begin{equation}\label{eq:recover}
    \frac{1}{d_1d_2}\| \X - \widehat{\X}\|^2_F < \epsilon^2
\end{equation}
with probability at least $1/4$.   We will imagine running this algorithm on a matrix $\X$ chosen uniformly at random from $\mathcal{X}$.   Let
\[
    \X^* = \argmin_{\X^{(i)} \in \mathcal{X}} \| \X^{(i)} - \widehat{\X} \|_F^2.
\]
It is easy to check that if~\eqref{eq:recover} holds, then $\X^* = \X$.  Indeed, for any $\X' \in \mathcal{X}$ with $\X' \neq \X$, from~\eqref{eq:recover} and~\eqref{eq:spacing} we have that
\[
    \| \X' - \widehat{\X} \|_F = \| \X' - \X + \X - \widehat{\X} \|_F \ge \| \X' - \X\|_F - \|\X - \widehat{\X} \|_F > 2 \sqrt{d_1 d_2} \epsilon - \sqrt{d_1 d_2} \epsilon = \sqrt{d_1 d_2} \epsilon.
\]
At the same time, since $\X \in \mathcal{X}$ is a candidate for $\X^*$, we have that
\[
    \| \X^* - \widehat{\X} \|_F \le \| \X - \widehat{\X} \|_F \le \sqrt{d_1 d_2} \epsilon.
\]
Thus, if~\eqref{eq:recover} holds, then $\| \X^* - \widehat{\X} \|_F < \| \X' - \widehat{\X} \|_F$ for any $\X' \in \mathcal{X}$ with $\X' \neq \X$, and hence we must have $\X^* = \X$.  By assumption,~\eqref{eq:recover} holds with probability at least $1/4$, and thus
\begin{equation}\label{eq:unlikely_recovery}
\Pr{\X \neq \X^*} \leq \frac34.
\end{equation}
We will show that this probability must in fact be large, generating our contradiction.

By a variant of Fano's inequality
\begin{equation}\label{eq:fanovariant2}
    \Pr{\X \neq \X^*} \geq 1 - \frac{ \max_{\X^{(k)} \neq \X^{(\ell)} } D( \Y_\Omega |\X^{(k)}\ \|\ \Y_\Omega | \X^{(\ell)} )  + 1 }{\log|\mathcal{X}|}.
\end{equation}
Because each entry of $\Y$ is independent,\footnote{Note that here, to be consistent with the literature we are referencing regarding Fano's inequality, $D$ is defined slightly differently than elsewhere in the paper where we would weight $D$ by $1/d_1 d_2$.}
\[
    D := D(\Y_\Omega | \X^{(k)} \ \| \ \Y_\Omega |\X^{(\ell)} ) = \sum_{(i,j) \in \Omega} D( Y_{i,j}| X^{(k)}_{i,j} \ \|\ Y_{i,j} | X^{(\ell)}_{i,j}).
\]
Each term in the sum is either $0$, $D( \alpha \| \alpha' )$, or $D( \alpha' \| \alpha)$.  By Lemma \ref{lem:kl_div}, all of these are bounded above by
\[
    D( Y_{i,j} | X^{(k)}_{i,j} \ \|\ Y_{i,j} | X^{(\ell)}_{i,j} ) \leq \frac{ ( f(\alpha) - f(\alpha'))^2 }{ f(\alpha')(1 - f(\alpha') )},
\]
and so, from the intermediate value theorem, for some $\xi \in [ \alpha', \alpha ]$,
\[
    D \leq \meas \frac{ (f(\alpha) - f(\alpha'))^2}{ f(\alpha') ( 1 -f (\alpha'))} \leq \meas \frac{ (f'(\xi))^2 (\alpha - \alpha')^2 }{ f(\alpha') (1 - f(\alpha')) }.
\]
Using the assumption that $f'(x)$ is decreasing for $x > 0$ and the definition of $\alpha' = (1 - \gamma)\alpha$, we have
\[
    D \leq \frac{ \meas (\gamma\alpha)^2 }{\beta_{\alpha'}} \leq\frac{ 64 \meas \epsilon^2 }{ \beta_{\alpha'} }.
\]
Then from \eqref{eq:fanovariant2} and \eqref{eq:unlikely_recovery},
\begin{equation}
\label{eq:theorem 3 fano}
    \frac{1}{4} \leq 1 - \Pr{\X \neq \X^*} \leq \frac{D + 1}{\log|\mathcal{X}|} \leq 16 \gamma^2 \left(\frac{ \frac{64 \meas \epsilon^2 }{\beta_{\alpha'}} + 1 }{r d_2}\right) \leq 1024 \epsilon^2 \left(\frac{ \frac{64 \meas \epsilon^2 }{\beta_{\alpha'}} + 1 }{\alpha^2 r d_2}\right).
\end{equation}
We now show that for appropriate values of $C_0$ and $C_2$, this leads to a contradiction.  First suppose that $64 \meas \epsilon^2 \le \beta_{\alpha'}$.  In this case we have
\[
    \frac{1}{4} \le 1024 \epsilon^2 \frac{2}{\alpha^2 r d_2},
\]
which together with~\eqref{eq:defeps} implies that $\alpha^2 r d_2 \le 8$.  If we set $C_0 > 8$ in~\eqref{eq:alphareq}, then this would lead to a contradiction.  Thus, suppose now that $64 \meas \epsilon^2 > \beta_{\alpha'}$. Then~\eqref{eq:theorem 3 fano} simplifies to
\[
   \frac{1}{4} < \frac{ 1024 \cdot 128 \cdot \meas \epsilon^4 }{\beta_{\alpha'} \alpha^2 r d_2}.
\]
Thus
\[
    \epsilon^2 > \frac{\alpha \sqrt{\beta_{\alpha'}}}{512\sqrt{2}} \sqrt{\frac{ r d_2}{  \meas }}.
\]
Note $\beta$ is increasing as a function of $\alpha$ and $\alpha' \geq 3\alpha/4$ (since $\gamma \leq 1/4$).  Thus, $\beta_{\alpha'} \geq \beta_{3\alpha/4}$. Setting $C_2 \le 1/512\sqrt{2}$ in~\eqref{eq:defeps} now leads to a contradiction, and hence~\eqref{eq:recover} must fail to hold with probability at least $3/4$, which proves the theorem.

\subsubsection{Proof of Theorem \ref{thm:lowerbound}}
Choose $\epsilon$ so that
\begin{equation}\label{eq:epsdef4}
	 \epsilon^2 = \min\left\{ \frac{1}{16}, C_2 \alpha\sigma \sqrt{ \frac{r d_2}{\meas }} \right\}
\end{equation}
for an absolute constant $C_2$ to be determined later. As in the proof of Theorem \ref{thm:one_bit_lowerbound}, we will consider running such an algorithm on a random element in a set $\mathcal{X} \subset K$. For our set $\mathcal{X}$, we will use the set whose existence is guaranteed by Lemma \ref{lem:existsX}.  We will set $\gamma$ so that $\frac{r}{\gamma^2}$ is an integer and
\[
    \frac{2\sqrt{2} \epsilon}{\alpha} \leq \gamma \leq  \frac{4  \epsilon }{\alpha}.
\]
This is possible since $\epsilon \le 1/4$ and $r, \alpha \geq 1$.  One can check that $\gamma$ satisfies the assumptions of Lemma \ref{lem:existsX}.

Now suppose that $\X \in \mathcal{X}$ is chosen uniformly at random, and let $\Y = \left.(\X + \Z)\right|_\Omega$ as in the statement of the theorem.  Let $\widehat{\X}$ be any estimate of $\X$ obtained from $\Y_\Omega$. We begin by bounding the mutual information $I(\X;\widehat{\X})$ in the following lemma (which is analogous to \cite[Equation 9.16]{cover}).
\begin{lemma}\label{lem:mutualInformation}
\[
    I(\X ; \widehat{\X}) \leq \frac{\meas }{2}\log\left( \sigma^2 + \left( \alpha^2 \gamma^2 \right) \right).
\]
\end{lemma}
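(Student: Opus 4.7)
The plan is to bound $I(\X;\widehat{\X})$ by passing through the Markov chain $\X \to \Y_\Omega \to \widehat{\X}$ and then running the standard Gaussian channel capacity-style computation on $I(\X;\Y_\Omega)$.

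Since $\widehat{\X}$ is a (possibly randomized) function of $\Y_\Omega$ alone, the data processing inequality gives $I(\X;\widehat{\X}) \leq I(\X;\Y_\Omega)$. I would then expand $I(\X;\Y_\Omega) = h(\Y_\Omega) - h(\Y_\Omega \mid \X)$ and handle the two entropies separately, treating $\Omega$ as a fixed subset of cardinality $\meas$ (as in the statement of Theorem~\ref{thm:lowerbound}).

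For the conditional entropy, note that given $\X$ the observations satisfy $\Y_\Omega = \X|_\Omega + \Z|_\Omega$, a deterministic translate of the noise. Since $\Z$ has i.i.d.\ $\mathcal{N}(0,\sigma^2)$ entries independent of $\X$, this yields $h(\Y_\Omega\mid \X) = h(\Z|_\Omega) = \tfrac{\meas}{2}\log(2\pi e\sigma^2)$. For the marginal entropy, I would use subadditivity of differential entropy to write $h(\Y_\Omega) \leq \sum_{(i,j)\in\Omega} h(Y_{i,j})$ and then invoke the maximum-entropy principle on each coordinate: by Lemma~\ref{lem:existsX} every $\X \in \mathcal{X}$ satisfies $|X_{i,j}| = \alpha\gamma$, so $\E X_{i,j}^2 \leq \alpha^2\gamma^2$ and hence $\Var(Y_{i,j}) \leq \alpha^2\gamma^2 + \sigma^2$. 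Since the Gaussian maximizes entropy at fixed variance, $h(Y_{i,j}) \leq \tfrac{1}{2}\log(2\pi e(\alpha^2\gamma^2+\sigma^2))$, and summing over the $\meas$ coordinates of $\Omega$ gives $h(\Y_\Omega) \leq \tfrac{\meas}{2}\log(2\pi e(\alpha^2\gamma^2+\sigma^2))$.

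Subtracting the two bounds cancels the factors of $2\pi e$ and produces a bound of the form $\tfrac{\meas}{2}\log\!\bigl(1 + \alpha^2\gamma^2/\sigma^2\bigr)$, the familiar Gaussian channel capacity expression, which is exactly the content of the cited inequality in \cite{cover}. The argument has no real obstacle: the only things to verify are the Markov property of the estimator (immediate from its definition), the independence of $\Z$ from $\X$ (part of the model), and the pointwise bound on $\E X_{i,j}^2$ supplied by the packing set construction. The subsequent use of this lemma in the proof of Theorem~\ref{thm:lowerbound} will combine it with Fano's inequality and the $|\mathcal{X}|$ estimate from Lemma~\ref{lem:existsX} to produce the desired lower bound on $\|\M-\widehat{\M}\|_F^2$.
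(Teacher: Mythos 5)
Your proof is correct and reaches the same bound the paper's proof actually establishes, namely $I(\X;\widehat{\X}) \leq \frac{\meas}{2}\log\bigl(1 + (\alpha\gamma/\sigma)^2\bigr)$ (the display in the lemma statement appears to be a typo; the quantity used downstream in the proof of Theorem~\ref{thm:lowerbound} is the channel-capacity form you derive). The one place where you genuinely diverge from the paper is in how the marginal entropy $h(\Y_\Omega)$ is controlled. You use subadditivity of differential entropy to reduce to scalar terms and then apply the scalar maximum-entropy bound $h(Y_{i,j}) \leq \tfrac12\log(2\pi e\,\Var(Y_{i,j}))$, needing only the marginal second moment $\E X_{i,j}^2 = \alpha^2\gamma^2$ from the packing set. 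The paper instead first symmetrizes: it multiplies $\X_\Omega$ entrywise by an independent Rademacher matrix $\boldsymbol{\xi}$, observes that $h(\X_\Omega\had\boldsymbol{\xi} + \Z_\Omega) \geq h(\X_\Omega + \Z_\Omega)$ since conditioning reduces entropy, and then notes that the symmetrized vector has \emph{exactly} diagonal covariance $(\sigma^2 + \alpha^2\gamma^2)\I_\meas$, so the multivariate maximum-entropy bound (Theorem 8.6.5 of \cite{cover}) applies with an explicit determinant. The two devices serve the same purpose --- neutralizing possible correlations between entries of $\X$ induced by the block structure of the packing set --- and your subadditivity step is essentially equivalent to applying Hadamard's inequality to the determinant in the paper's argument; it is arguably the more elementary route, at the cost of not exhibiting an explicit covariance. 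Everything else (data processing through $\X \to \Y_\Omega \to \widehat{\X}$, $h(\Y_\Omega\mid\X) = h(\Z_\Omega)$, cancellation of the $2\pi e$ factors) matches the paper.
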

\begin{proof}
We begin by noting that
\[
    I(\X_\Omega ; \Y) = h(\X_\Omega + \Z_\Omega) - h(\X_\Omega + \Z_\Omega | \X_\Omega ) = h(\X_\Omega + \Z_\Omega) - h(\Z_\Omega),
\]
where $h$ denotes the differential entropy.  Let $\mathbf{\xi}$ denote a matrix of i.i.d.\ $\pm 1$ entries.  Then
\[
    h(\X_\Omega \had \mathbf{\xi} + \Z_\Omega) = h( (\X_\Omega + \Z_\Omega) \had \mathbf{\xi} ) \geq h( (\X_\Omega + \Z_\Omega) \had \mathbf{\xi}\ \mid\ \mathbf{\xi} ) = h(\X_\Omega + \Z_\Omega),
\]
and so, letting $\widetilde{\X} = \X \had \mathbf{\xi}$,
\[
    I(\X_\Omega ; \Y) \leq h(\widetilde{\X}_\Omega + \Z_\Omega) - h(\Z_\Omega).
\]
Treating $\widetilde{\X}_\Omega + \Z_\Omega$ as a random vector of length $\meas $, we compute the covariance matrix as
\[
    \bSigma := \Exp{ \vecto(\widetilde{\X}_\Omega + \Z_\Omega) \vecto(\widetilde{\X}_\Omega + \Z_\Omega)^T } = \left(\sigma^2 + (\alpha\gamma)^2 \right)\I_{\meas} .
\]
By Theorem 8.6.5 in~\cite{cover},
\[
    h(\widetilde{\X}_\Omega + \Z_\Omega) \leq \frac{1}{2} \log\left((2\pi e)^{\meas } \det(\bSigma)\right) = \frac{1}{2} \log\left( (2\pi e)^{\meas}  (\sigma^2 + (\alpha \gamma)^2)^{\meas} \right).
\]
We have that $h(\Z_\Omega) = \frac{1}{2} \log \left( (2\pi e)^{\meas } \sigma^{2 \meas } \right)$, and so
\[
    I(\X_\Omega; \Y ) \leq \frac{\meas }{2} \log\left( 1 + \left(\frac{\alpha \gamma}{\sigma}\right)^2\right).
\]
Then the data processing inequality implies
\[
    I(\X;\widehat{\X}) \leq \frac{\meas }{2} \log\left( 1 + \left(\frac{\alpha \gamma}{\sigma}\right)^2 \right),
\]
which establishes the lemma.
\end{proof}

We now proceed by using essentially the same argument as in the proof of Theorem~\ref{thm:one_bit_lowerbound}.  Specifically, we suppose for the sake of a contradiction that there exists an algorithm such that for any $\X \in K$, when given access to the measurements $\Y_\Omega$, returns an $\widehat{\X}$ such that
\begin{equation}\label{eq:recover2}
    \frac{1}{d_1d_2}\| \X - \widehat{\X}\|^2_F < \epsilon^2
\end{equation}
with probability at least $1/4$.  As before, if we set
\[
    \X^* = \argmin_{\X^{(i)} \in \mathcal{X}} \| \X^{(i)} - \widehat{\X} \|_F^2
\]
then we can show that if~\eqref{eq:recover2} holds, then $\X^* = \X$.  Thus, if ~\eqref{eq:recover2} holds with probability at least $1/4$ then
\begin{equation}\label{eq:unlikely_recovery2}
    \Pr{\X \neq \X^*} \leq \frac34.
\end{equation}
However, by Fano's inequality, the probability that $\X \neq \widehat{\X}$ is at least
\[
    \Pr{\X \neq \widehat{\X}} \geq \frac{H(\X|\widehat{\X}) - 1 }{\log(|\mathcal{X}|)} = \frac{ H(\X) - I(\X;\widehat{\X}) - 1 }{\log(|\mathcal{X}|)} \geq 1 - \frac{ I(\X;\widehat{\X}) + 1}{\log|\mathcal{X}|}
\]
Plugging in $|\mathcal{X}|$ from Lemma \ref{lem:existsX} and $I(\X;\widehat{\X})$ from Lemma \ref{lem:mutualInformation}, and using the inequality $\log(1 + z) \leq z$, we obtain
\[
    \Pr{\X \neq \widehat{\X}} \geq 1 - \frac{16 \gamma^2 }{r d_2 } \left( \frac{\meas }{2} \left(\frac{\alpha \gamma}{\sigma} \right)^2 + 1 \right).
\]
Combining this with~\eqref{eq:unlikely_recovery2} and using the fact that $\gamma \leq 4 \epsilon / \alpha$, we obtain
\[
    \frac{1}{4} \leq \frac{ 256 \epsilon^2 }{\alpha^2 r d_2 } \left(  8 \meas \left( \frac{\epsilon^2}{\sigma^2} \right)  + 1 \right).
\]
We now argue, as before, that this leads to a contradiction.  Specifically, if $8 \meas \epsilon^2/\sigma^2 \le 1$, then together with~\eqref{eq:epsdef4} this implies that $\alpha^2 r d_2 \le 128$.  If we set $C_0 > 128$ in~\eqref{eq:alphareq}, then this would lead to a contradiction.  Thus, suppose now that $8 \meas \epsilon^2/\sigma^2 > 1$, in which case we have
\[
    \epsilon^2 > \frac{\alpha \sigma}{128} \sqrt{\frac{ r d_2 }{\meas }}.
\]
Thus, setting $C_2 \le 1/128$ in~\eqref{eq:epsdef4} leads to a contradiction, and hence~\eqref{eq:recover2} must fail to hold with probability at least $3/4$, which proves the theorem.

\subsubsection{Proof of Theorem \ref{thm:distribution lower bound}}

The proof of Theorem \ref{thm:distribution lower bound} also mirrors the proof of Theorem \ref{thm:one_bit_lowerbound}.  The main difference is the observation that the set constructed in Lemma \ref{lem:existsX} also works with the Hellinger distance.  We begin as before by choosing $\epsilon$ so that
\begin{equation}\label{eq:defofeps}
    \epsilon^2 = \min\left\{ \frac{c}{16}, C_2 \frac{\alpha}{L_1}  \sqrt{ \frac{ r d_2 }{\meas } }\right\},
\end{equation}
where $C_2$ is an absolute constant to be determined. Set $\gamma$ to be an integer so that
\[
    2\sqrt{2}\frac{\epsilon}{\alpha c} \leq \gamma \leq \frac{4 \epsilon}{\alpha c}.
\]
This is possible since by assumption $\alpha \ge 1$ and $\epsilon \leq \frac{c}{4}$.  One can check that $\gamma$ satisfies the assumptions of Lemma \ref{lem:existsX}.

As in the proof of Theorem \ref{thm:one_bit_lowerbound}, we will consider running such an algorithm on a random element in a set $\mathcal{X} \subset K$. For our set $\mathcal{X}$, we will use the set whose existence is guaranteed by Lemma \ref{lem:existsX}.  Note that since the Hellinger distance is bounded below by the Frobenius norm, we have that for all $\X^{(i)} \neq \X^{(j)} \in \mathcal{X}$,
\[
    d_H^2( f(\X^{(i)} ) - f(\X^{(j)}) ) \geq \| f(\X^{(i)}) - f(\X^{(j)} ) \|_{F}^2 \geq c^2 \| \X^{(i)} - \X^{(j)} \|_{F}^2 > \frac{c^2}{2} \alpha^2 \gamma^2 d_1 d_2  \geq 4 d_1d_2 \epsilon^2.
\]
Now suppose for the sake of a contradiction that there exists an algorithm such that for any $\X \in K$, when given access to the measurements $\Y_\Omega$, returns an $\widehat{\X}$ such that
\begin{equation}\label{eq:recover3}
    d_H^2(f(\X), f(\widehat{\X}) ) < \epsilon^2
\end{equation}
with probability at least $1/4$. If we set
\[
    \X^* = \argmin_{\X^{(i)} \in \mathcal{X}} d_H^2( f(\X^{(i)}) - f(\widehat{\X}) )
\]
then we can show that if~\eqref{eq:recover3} holds, then $\X^* = \X$.  Thus, if ~\eqref{eq:recover3} holds with probability at least $1/4$ then
\begin{equation}\label{eq:unlikely_recovery3}
    \Pr{\X \neq \X^*} \leq \frac34.
\end{equation}
However, we may again apply Fano's inequality as in~\eqref{eq:fanovariant2}. Using Lemma~\ref{lem:kl_div} we have
\[
    D(Y_{i,j}|X_{i,j}^{(k)} \ \|\ Y_{i,j}|X_{i,j}^{(\ell)} ) \leq \frac{ (f(\alpha \gamma) - f(-\alpha\gamma))^2}{ f(\alpha\gamma)(1 - f(\alpha\gamma))} \leq \frac{ 4(f'(\xi))^2 \alpha^2 \gamma^2 }{f(\alpha \gamma)(1 - f(\alpha \gamma))} \leq \frac{ 4f^2(\xi) L^2_{\alpha \gamma} \alpha^2 \gamma^2 }{ f(\alpha \gamma)(1 - f(\alpha \gamma))},
\]
for some $|\xi| \leq \alpha \gamma$, where $L_{\alpha \gamma}$ is as in~\eqref{eq:lipschitz}.  By the assumption that $c' < |f(x)| < 1-c'$ for $|x| < 1$, and that
\[
    \alpha \gamma \leq \alpha \left( \frac{4 \epsilon}{\alpha c} \right) \leq \frac{4\epsilon}{c} \leq 1,
\]
we obtain
\[
    D(Y_{i,j}|X_{i,j}^{(k)} \ \|\ Y_{i,j}|X_{i,j}^{(\ell)} ) \leq \frac{ 4c' L_1^2 \alpha^2 \gamma^2 }{1 - c'} \leq C' L_1^2 \epsilon^2,
\]
where $C' = 64 c'/(c^2(1-c'))$. Thus, from~\eqref{eq:fanovariant2}, we have
\[
    \frac{1}{4} \leq \frac{ C' \meas  L_1^2 \epsilon^2 + 1}{\log|\mathcal{X}| } \leq \frac{256}{c^2} \epsilon^2 \left( \frac{ C' \meas  L_1^2 \epsilon^2 + 1 }{\alpha^2 r d_2} \right).
\]
We now argue once again that this leads to a contradiction.  Specifically, if $C' \meas L_1^2 \epsilon^2 \le 1$, then together with~\eqref{eq:defofeps} this implies that $\alpha^2 r d_2 \le 128/c$.  If we set $C_0 > 128/c$ in~\eqref{eq:alphareq}, then this would lead to a contradiction.  Thus, suppose now that $C' \meas L_1^2 \epsilon^2  > 1$, in which case we have
\[
    \epsilon^2 > \frac{c}{32 \sqrt{2C'}} \frac{\alpha}{L_1} \sqrt{ \frac{ r d_2 }{\meas } }.
\]
Thus setting $C_2 \le c/32 \sqrt{2C'}$ in~\eqref{eq:defofeps} leads to a contradiction, and hence~\eqref{eq:recover3} must fail to hold with probability at least $3/4$, which proves the theorem.

\bibliographystyle{abbrv}
\bibliography{mainbib}

\end{document}